\def\dom{\mathop{\mathrm{Dom}}\nolimits}
\def\im{\mathop{\mathrm{Im}}\nolimits}
\def\rank{\mathop{\mathrm{rank}}\nolimits}
\def\d{\mathrm{d}} 
\def\id{\mathrm{id}}
\def\N{\mathbb N}
\def\PT{\mathcal{PT}}
\def\T{\mathcal{T}}
\def\Sym{\mathcal{S}}
\def\DP{\mathcal{DP}}
\def\D{\mathcal{D}}
\def\ODP{\mathcal{ODP}} 
\def\DI{\mathcal{DI}}
\def\I{\mathcal{I}}
\def\DPW{\mathcal{DPW}}
\newcommand{\<}{\leqslant} 
\renewcommand{\>}{\geqslant} 
\newtheorem{theorem}{Theorem}[section]
\newtheorem{proposition}[theorem]{Proposition}
\newtheorem{corollary}[theorem]{Corollary}
\newtheorem{lemma}[theorem]{Lemma}
\newenvironment{proof}{\begin{trivlist}\item[\hskip%
\labelsep{\bf Proof.}]}%
{\qed\rm\end{trivlist}}
\newcommand{\qed}{{\unskip\nobreak
\hfil\penalty50\hskip .001pt \hbox{}
          \nobreak\hfil
         \vrule height 1.2ex width 1.1ex depth -.1ex
           \parfillskip=0pt\finalhyphendemerits=0\medbreak}}
\newcommand{\lastpage}{\addresss}
\newcommand{\addresss}{\small \sf

\noindent{\sc V\'\i tor H. Fernandes},
Center for Mathematics and Applications (NovaMath)
and Department of Mathematics, NOVA FCT, 
Universidade Nova de Lisboa,
Monte da Caparica,
2829-516 Caparica,
Portugal;
e-mail: vhf@fct.unl.pt.
}
\title{On the monoid of partial isometries of a wheel graph} 
\author{V\'\i tor H. Fernandes\footnote{This work is funded by national funds through the FCT - Funda\c c\~ao para a Ci\^encia e a Tecnologia, I.P., under the scope of the projects UIDB/00297/2020 and UIDP/00297/2020 (NovaMath - Center for Mathematics and Applications).}}
\begin{document}

\maketitle

\vspace*{-.75cm}

\begin{abstract}
In this paper, we consider the monoid $\DPW_n$ of all partial isometries of a wheel graph $W_n$ with $n+1$ vertices. 
Our main objective is to determine the rank of $\DPW_n$. 
In the process, we also compute the ranks of three notable subsemigroups of $\DPW_n$. 
We also describe Green's relations of $\DPW_n$ and of its three considered subsemigroups.
\end{abstract}

\medskip

\noindent{\small 2020 \it Mathematics subject classification: \rm 20M10, 20M20, 05C12, 05C25.}

\noindent{\small\it Keywords: \rm transformations, orientation, partial isometries, wheel graphs, rank.}

\section*{Introduction and preliminaries}

Let $\Omega$ be a set. Denote by $\PT(\Omega)$ the monoid (under composition) of all
partial transformations on $\Omega$, by $\T(\Omega)$ the submonoid of $\PT(\Omega)$ of all
full transformations on $\Omega$, by $\I(\Omega)$
the \textit{symmetric inverse monoid} on $\Omega$, i.e.
the inverse submonoid of $\PT(\Omega)$ of all
partial permutations on $\Omega$,
and by $\Sym(\Omega)$ the \textit{symmetric group} on $\Omega$,
i.e. the subgroup of $\PT(\Omega)$ of all
permutations on $\Omega$. 
Let $n\in\N$. 
If $\Omega$ is a finite set with $n$ elements, 
say $\Omega=\Omega_n=\{1,2,\ldots,n\}$, we denote, as usual,
$\PT(\Omega)$, $\T(\Omega)$, $\I(\Omega)$ and $\Sym(\Omega)$ simply by $\PT_n$, $\T_n$, $\I_n$ and $\Sym_n$, respectively.

\smallskip

Recall that the \textit{rank} of a monoid $M$ is the minimum size of a generating set of $M$, i.e.
the minimum of the set $\{|X|\mid \mbox{$X\subseteq M$ and $X$ generates $M$}\}$. 
For $n\geqslant3$, 
it is well-known that $\Sym_n$
has rank $2$ 
and
$\T_n$, $\I_n$ and $\PT_n$ have
ranks $3$, $3$ and $4$, respectively.
The survey \cite{Fernandes:2002survey} presents
these results and similar ones for other classes of transformation monoids,
in particular, for monoids of order-preserving transformations and
for some of their extensions.
For example, the rank of the extensively studied monoid of all order-preserving transformations of a chain with $n$ elements is $n$,
which is a result proved by Gomes and Howie \cite{Gomes&Howie:1992} in 1992.
More recently, for instance, the papers
\cite{
Araujo&al:2015,
Dimitrova&al:2020, 
Dimitrova&Koppitz:2017, 
Fernandes&al:2014,
Fernandes&Quinteiro:2014,
Fernandes&Sanwong:2014}
are dedicated to the computation of the ranks of certain classes of transformation semigroups or monoids.

\smallskip

Let $G=(V,E)$ be a finite simple connected graph. 
The (\textit{geodesic}) \textit{distance} between two vertices $x$ and $y$ of $G$, 
denoted by $\d_G(x,y)$, is the length of a shortest path between $x$ and $y$, i.e. the number of edges in a shortest path between $x$ and $y$. 
We say that $\alpha\in\PT(V)$ is a \textit{partial isometry} or \textit{distance preserving partial transformation} of $G$ if
$
\d_G(x\alpha,y\alpha) = \d_G(x,y) 
$
for all $x,y\in\dom(\alpha)$. 
Let $\DP(G)=\{\alpha\in\PT(V)\mid\mbox{$\alpha$ is a partial isometry of $G$}\}$. 
Then, clearly, $\DP(G)$ is a submonoid of $\PT(V)$ and, 
as a consequence of the property
$\d_G(x,y)=0$ if and only if $x=y$,   
for all $x,y\in V$, it immediately follows that $\DP(G)\subseteq\I(V)$. Moreover, 
$\DP(G)$ is an inverse submonoid of $\I(V)$
(see \cite{Fernandes&Paulista:2023}).
For instance, if $G=(V,E)$ is a complete graph, i.e. $E=\{\{x,y\}\mid x,y\in V, x\neq y\}$, then $\DP(G)=\I(V)$.

For $n\>1$, consider the undirected path $P_n$ with $n$ vertices, i.e.
$$
P_n=\left(\Omega_n,\{\{i,i+1\}\mid 1\<i\<n-1\}\right).
$$
Then, $\DP(P_n)$ coincides with the monoid
$
\DP_n=\{\alpha\in\I_n \mid |i\alpha-j\alpha|=|i-j| \mbox{~for all $i,j\in\dom(\alpha)$}\}
$
of all partial isometries on $\Omega_n$. 
The study of partial isometries on $\Omega_n$ was initiated
by Al-Kharousi et al.~in \cite{AlKharousi&Kehinde&Umar:2014} and \cite{AlKharousi&Kehinde&Umar:2016}.
The first of these two papers is dedicated to investigating some combinatorial properties of
the monoid $\DP_n$ and of its submonoid $\ODP_n$ of all order-preserving (considering the usual order on $\Omega_n$) partial isometries, 
in particular, to computing  their cardinalities. The second one presents the study of some of their algebraic properties, namely of Green's relations and ranks. 
Presentations for both the monoids $\DP_n$ and $\ODP_n$ were given by Fernandes and Quinteiro in \cite{Fernandes&Quinteiro:2016} 
and the maximal subsemigroups of $\ODP_n$ were characterized by Dimitrova in \cite{Dimitrova:2013}. 
Moreover, for $2 \leqslant r \leqslant n-1$, Bugay et al.~obtained in \cite{Bugay&Yagci&Ayik:2018} the ranks of the subsemigroups
$\DP_{n,r} = \{\alpha\in\DP_n\mid |\im(\alpha)|\leqslant r\}$ of $\DP_n$ and
$\ODP_{n,r} = \{\alpha\in\ODP_n\mid |\im(\alpha)| \leqslant r\}$ of $\ODP_n$. 

The monoid $\DP(S_n)$ of all partial isometries of a star graph $S_n$ with $n$ vertices ($n\geqslant1$), i.e. 
$$
S_n=\left(\{0,1,\ldots,n-1\},\{\{0,i\}\mid 1\<i\<n-1\}\right), 
$$
was considered by Fernandes and Paulista in \cite{Fernandes&Paulista:2023}. 
They determined the rank and size of $\DP(S_n)$ as well as described its Green's relations. 
A presentation for $\DP(S_n)$ was also exhibited in \cite{Fernandes&Paulista:2023}.

For $n\geqslant3$, consider the cycle graph
$$
C_n=(\Omega_n, \{\{i,i+1\}\mid 1\<i\<n-1\}\cup\{\{1,n\}\})
$$
with $n$ vertices.  
The monoid $\DP(C_n)$ of all partial isometries of $C_n$ was studied by Fernandes and Paulista in \cite{Fernandes&Paulista:2022arxiv}. 
They showed that $\DP(C_n)$ 
coincides with the inverse submonoid $\DI_n$ of $\I_n$ 
formed by all restrictions of (the elements of) a dihedral subgroup of $\Sym_n$ of order $2n$ and called it the dihedral inverse monoid on $\Omega_n$. 
In \cite{Fernandes&Paulista:2022arxiv}, the authors also determined the cardinal and rank of $\DI_n$ as well as presentations and descriptions of its Green's relations.

\smallskip

Now, for a positive integer $n$, 
denote by $W_n$ a \textit{wheel graph} with $n+1$ vertices and fix 
$$
W_n=\big(\Omega_n^0,\{\{0,i\}\mid 1\leqslant i\leqslant n\}\cup\{\{i,i+1\}\mid 1\leqslant i\leqslant n-1\}\cup\{\{1,n\}\}\big), 
$$ 
where $\Omega_n^0=\{0,1,\ldots,n\}$. 
\begin{center}
\begin{tikzpicture}
\draw (0,0) node{$\bullet$} (0,2) node{$\bullet$} (-1,1) node{$\bullet$} (1,1) node{$\bullet$}; 
\draw (0.7,0.3) node{$\bullet$} (0.7,1.7) node{$\bullet$} (-0.7,1.7) node{$\bullet$}; 
\draw (0,-0.2) node{$\scriptstyle5$} (0,2.25) node{$\scriptstyle1$} (-1.4,1) node{$\scriptstyle n-1$} (1.18,1) node{$\scriptstyle3$}; 
\draw (0.9,0.3) node{$\scriptstyle4$} (0.95,1.7) node{$\scriptstyle2$} (-0.95,1.7) node{$\scriptstyle n$}; 
\draw (0,1) node{$\bullet$}; \draw (-.15,.85) node{$\scriptstyle0$}; 
\draw[thick] (0,1) -- (0,0); \draw[thick] (0,1) -- (0,2); \draw[thick] (0,1) -- (-1,1); \draw[thick] (0,1) -- (1,1); 
\draw[thick] (0,1) -- (0.7,0.3); \draw[thick] (0,1) -- (0.7,1.7) ; \draw[thick] (0,1) -- (-0.7,1.7); 
\draw[thick] (0,0) arc (-90:180:1); 
\draw[thin,dashed] (0,0) arc (-90:-180:1);
\end{tikzpicture}
\end{center}

For convenience, on several occasions throughout this paper, 
for elements of $\Omega_n$, when they are playing the role of vertices of $W_n$,  
we take addition (or subtraction) modulo $n$, with $\Omega_n$ as set of representatives. 
For instance, when $1$ and $n$ are considered as vertices of $W_n$, the expressions $n+1$ and $1-n$ also denote the vertex $1$ of $W_n$.  
Notice that, in this line, we can write simply $W_n=\big(\Omega_n^0,\{\{0,i\},\{i,i+1\}\mid 1\leqslant i\leqslant n\}\big)$.  

\smallskip 

Let  $\d=\d_{W_n}$. Then, clearly, we have: 
\begin{itemize}
\item $\d(0,i)=1$ for $1\leqslant i\leqslant n$; 
\item $\d(1,n)=\d(i,i+1)=1$ for $1\leqslant i\leqslant n-1$;
\item $\d(i,n)=2$ for $2\leqslant i\leqslant n-2$; and 
\item $\d(i,j)=2$ for $1\leqslant i< j-1\leqslant n-2$. 
\end{itemize}

Let us denote $\DP(W_n)$ simply by $\DPW_n$. Observe that $\DPW_n$ is an inverse submonoid of $\I(\Omega_n^0)\simeq\I_{n+1}$. 

\smallskip 

In this paper, we study the monoid $\DPW_n$ of all partial isometries of a wheel graph $W_n$ with $n+1$ vertices. 
Our main objective is to determine the rank of $\DPW_n$, which we obtain in Section \ref{gr}, our last section.  
In the process, we also compute the ranks of three notable subsemigroups of $\DPW_n$, 
namely $\DPW_n^-$, $\DPW_n^+$ and $\DPW_n^-\cup\DPW_n^+$, that we define in Section \ref{subs}. 
Green's relations of $\DPW_n^-$, $\DPW_n^+$, $\DPW_n^-\cup\DPW_n^+$ and $\DPW_n$ are described in Section \ref{green}. 
Constituting key results to achieve the main result of this paper, 
in Section \ref{gr-}, we determine a set of generators and the rank of $\DPW_n^-$. 

\smallskip 

It is clear that $\DPW_1\simeq\I_{2}$, $\DPW_2\simeq\I_{3}$ and $\DPW_3\simeq\I_{4}$.  
Therefore, we focus our attention on $\DPW_n$ only for $n\geqslant 4$. 
Thus, until the end of this paper, we consider $n\geqslant4$.

\medskip 

For general background on Semigroup Theory and standard notations, we refer to Howie's book \cite{Howie:1995}.

\smallskip

We would like to point out that we made considerable use of computational tools, namely GAP \cite{GAP4}.

\section{The monoids $\DPW_n^-$ and $\DPW_n^+$}\label{subs}

In this section, we start by presenting some basic properties of $\DPW_n$ which, in particular, lead us to define the monoids $\DPW_n^-$ and $\DPW_n^+$. 
We also give characterizations of the elements of $\DPW_n^-$ and determine the groups of units of these monoids. 

\smallskip 

Let $J_k=\{\alpha\in\DPW_n\mid \rank(\alpha)=k\}$ for $0\< k\<n+1$. 

\begin{lemma}\label{split}
Let $0\<k\<n+1$ and $\alpha\in J_k$.  Then: 
\begin{enumerate}
\item If $0\in\dom(\alpha)$ and $0\alpha\neq0$ then $k\<4$;
\item If $0\in\im(\alpha)$ and $0\alpha^{-1}\neq0$ then $k\<4$;
\item If $k\geqslant4$ then $0\in\dom(\alpha)$ if and only if $0\in\im(\alpha)$; 
\item If $k\geqslant5$ and $0\in\dom(\alpha)\cup\im(\alpha)$ then $0\in\dom(\alpha)\cap\im(\alpha)$ and $0\alpha=0$. 
\end{enumerate}
\end{lemma}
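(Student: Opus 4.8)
The plan is to prove item~1 by a direct neighbourhood argument in $W_n$ and then deduce items~2--4 from it, using throughout that $\DPW_n$ is an inverse monoid, so that $\alpha^{-1}\in\DPW_n$ and $\rank(\alpha^{-1})=\rank(\alpha)$ (since $\im(\alpha^{-1})=\dom(\alpha)$).

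For item~1, I would write $p=0\alpha$, which by hypothesis is a rim vertex of $W_n$. The key structural fact is that the hub $0$ is adjacent to every other vertex, so $\d(x,0)\<1$ for all $x\in\Omega_n^0$, with equality unless $x=0$. Applying the distance-preserving property of $\alpha$ to the pairs $\{x,0\}$ with $x\in\dom(\alpha)$ then gives $\d(x\alpha,p)=\d(x,0)\<1$, whence $\im(\alpha)$ is contained in the closed neighbourhood $N[p]=\{0,\,p-1,\,p,\,p+1\}$ of $p$ (indices read modulo $n$). Since $n\>4$, the set $N[p]$ has exactly four elements, so $k=|\im(\alpha)|\<4$. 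Item~2 is then immediate by applying item~1 to $\alpha^{-1}$, for which $0\in\dom(\alpha^{-1})$, $0\alpha^{-1}\neq0$ and $\rank(\alpha^{-1})=k$.

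For item~3, assume $k\>4$ and $0\in\dom(\alpha)$. If $0\alpha=0$ there is nothing to prove; otherwise $0\alpha\neq0$ and item~1 forces $k=4$, so the inclusion $\im(\alpha)\subseteq N[0\alpha]$ obtained in the proof of item~1 is an equality (both sides having four elements), and hence $0\in N[0\alpha]=\im(\alpha)$. The converse implication follows by applying this to $\alpha^{-1}$. Finally, for item~4, suppose $k\>5$ and $0\in\dom(\alpha)\cup\im(\alpha)$; since $k\>4$, item~3 gives $0\in\dom(\alpha)\cap\im(\alpha)$, and $0\alpha\neq0$ is ruled out by item~1 (it would give $k\<4$), so $0\alpha=0$.

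I do not expect a genuine obstacle: the whole lemma rests on the elementary observation that in $W_n$ the hub $0$ has closed neighbourhood equal to the full vertex set, while every rim vertex has a closed neighbourhood of size exactly $4$ (here $n\>4$, in fact $n\>3$, is used). The only things to keep track of are the modular labelling of the rim vertices and the bookkeeping that turns items~3 and~4 into honest consequences of items~1 and~2.
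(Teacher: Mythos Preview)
Your proposal is correct and follows essentially the same approach as the paper: both argue that if $0\alpha=p\neq0$ then every element of $\im(\alpha)$ lies at distance at most $1$ from $p$, hence $\im(\alpha)\subseteq\{0,p-1,p,p+1\}$, giving $k\<4$; both then obtain item~2 by passing to $\alpha^{-1}$, item~3 by noting that $k=4$ forces $\im(\alpha)=\{0,p-1,p,p+1\}\ni0$, and item~4 as an immediate consequence. Your use of the closed-neighbourhood language is a cosmetic repackaging of the paper's explicit vertex computation.
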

\begin{proof} 
Let 
$
\alpha=\left(\begin{smallmatrix} 
i_1&i_2&\cdots&i_k\\
j_1&j_2&\cdots&j_k
\end{smallmatrix}\right)  
$
with $0\leqslant i_1<i_2<\cdots<i_k\leqslant n$. 

1. Suppose that $i_1=0$ and $j_1\neq0$. Then $\d(j_1,j_t)=\d(0,i_t)=1$ and so $j_t\in\{0,j_1-1,j_1+1\}$, 
for $2\< t\< k$. Hence, $\{j_2,\ldots,j_k\}\subseteq\{0,j_1-1,j_1+1\}$ and, by the injectivity of $\alpha$, we have $k-1\<3$, i.e. $k\<4$. 

Observe that, in this case, if $k=4$ then $\{j_2,j_3,j_4\}=\{0,j_1-1,j_1+1\}$ and so $0\in\im(\alpha)$. 

\smallskip 

2. To prove Property 2, it suffices to apply Property 1 to $\alpha^{-1}$. 

\smallskip 

3. Suppose that $k\>4$ and $0\in\dom(\alpha)$. If $0\alpha=0$ then $0\in\im(\alpha)$. 
On the other hand, if $0\alpha\neq0$ then $k\<4$, by Property 1, and so $k=4$. 
Hence, as observed above, $0\in\im(\alpha)$. 

If $k\>4$ and $0\in\im(\alpha)$ then $0\in\dom(\alpha^{-1})$ and so, from what we just proved, $0\in\im(\alpha^{-1})$, i.e. $0\in\dom(\alpha)$. 

\smallskip 

4. Suppose that $k\geqslant5$ and $0\in\dom(\alpha)\cup\im(\alpha)$. Then, by Property 3, we obtain $0\in\dom(\alpha)\cap\im(\alpha)$ and so, 
by Property 1, we have $0\alpha=0$, as required.  
\end{proof}

Given a set $\Omega$, a subset $X$ of $\Omega$ and $\alpha\in\PT(\Omega)$, we denote the restriction of $\alpha$ to $X$ by $\alpha|_X$. 
So, $\alpha|_X$ is the partial transformation of $\Omega$ such that $\dom(\alpha|_X)=\dom(\alpha)\cap X$ and $x\alpha|_X=x\alpha$ for $x\in\dom(\alpha|_X)$. 

\medskip 

Let 
$$
\DPW_n^-=\{\alpha\in\DPW_n\mid 0\not\in\dom(\alpha)\cup\im(\alpha)\}
$$ 
and 
$$
\DPW_n^+=\{\alpha\in\DPW_n\mid \mbox{$0\in\dom(\alpha)$ and $0\alpha=0$}\}. 
$$
As a consequence of Lemma \ref{split}, it is easy to deduce that $\DPW_n^+$ and $\DPW_n^-$ are isomorphic inverse subsemigroups of $\DPW_n$. 
An isomorphism $\Psi$ from $\DPW_n^+$ into $\DPW_n^-$ can be defined by $\alpha\Psi=\alpha|_{\Omega_n}$ for $\alpha\in\DPW_n^+$. 
Moreover, $\DPW_n^+$ is a submonoid of $\DPW_n$ and $\DPW_n^-$ is a monoid but not a submonoid of $\DPW_n$. 
On the other hand, we may consider $\DPW_n^-$ contained in $\I_n$ and, therefore, it is an inverse submonoid of $\I_n$. 
Observe that it is easy to conclude that $\DPW_n^-\cup\DPW_n^+$ is also an inverse submonoid of $\DPW_n$ which admits $\DPW_n^-$ as an ideal. 
Notice also that, if $\alpha\in\DPW_n\setminus(\DPW_n^-\cup\DPW_n^+)$ then $0\in\dom(\alpha)\cup\im(\alpha)$ and, 
by Properties 1 and 2 of Lemma \ref{split}, we have $\rank(\alpha)\<4$.  

\smallskip 

Next, consider the following permutations of $\Omega_n$ of order $n$ and $2$, respectively:
$$
g=\begin{pmatrix}
1&2&\cdots&n-1&n\\
2&3&\cdots&n&1
\end{pmatrix}
\quad\text{and}\quad
h=\begin{pmatrix}
1&2&\cdots&n-1&n\\
n&n-1&\cdots&2&1
\end{pmatrix}.
$$

It is clear that $g,h\in\DPW_n$.
Moreover, for $n\geqslant3$, $g$ and $h$ generate the well-known \textit{dihedral group} $\D_{2n}$ of order $2n$ 
(considered as a subgroup of $\Sym_n$). In fact, for $n\geqslant3$, 
$$
\D_{2n}=\langle g,h\mid g^n=1,h^2=1, hg=g^{n-1}h\rangle=\{1,g,g^2,\ldots,g^{n-1}, h,hg,hg^2,\ldots,hg^{n-1}\} 
$$
and we have 
$$
g^k=\begin{pmatrix} 
1&2&\cdots&n-k&n-k+1&\cdots&n\\
1+k&2+k&\cdots&n&1&\cdots&k
\end{pmatrix}, 
\quad\text{i.e.}\quad  
ig^k=\left\{\begin{array}{ll}
i+k & \mbox{if $1\leqslant i\leqslant n-k$}\\
i+k-n & \mbox{if $n-k+1\leqslant i\leqslant n$,}  
\end{array}\right.
$$
and 
$$
hg^k=\begin{pmatrix} 
1&\cdots&k&k+1&\cdots&n\\
k&\cdots&1&n&\cdots&k+1
\end{pmatrix}, 
\quad\text{i.e.}\quad  
ihg^k=\left\{\begin{array}{ll}
k-i+1 & \mbox{if $1\leqslant i\leqslant k$}\\
n+k-i+1 & \mbox{if $k+1\leqslant i\leqslant n$,} 
\end{array}\right.
$$
for $0\leqslant k\leqslant n-1$. 

\smallskip

Now, recall that, for $\alpha\in\I_n$, we have $\alpha\in\DI_n$ if and only if $\alpha=\sigma|_{\dom(\alpha)}$ for some $\sigma\in\D_{2n}$  
(see \cite{Fernandes&Paulista:2022arxiv}). Observe that, clearly, $g,h\in\DPW_n^-$ and, since $\DPW_n$ (in fact, $\DP(G)$, for any graph $G$) contains all restrictions of each of its elements and, consequently, the same property is valid for $\DPW_n^-$, we get $\DI_n\subseteq\DPW_n^-$. 

\smallskip 

For $1\<i,j\<n$, we define the \textit{arc} $A_{i,j}$ of $\Omega_n$ to be the set 
$$
A_{i,j}=\left\{\begin{array}{ll}
\{i,i+1,\ldots,j-1,j\} & \mbox{if $i\<j$}\\
\{i,\ldots,n,1,\ldots,j\} & \mbox{if $j<i$}. 
\end{array}\right. 
$$ 
Notice that $A_{1,n}=A_{i+1,i}=\Omega_n$ for $1\<i\<n-1$. Observe also that, for $1\<i\<j\<n$, the arc $A_{i,j}$ is also an interval of $\Omega_n$  (for its usual linear order). 

Let $X$ be a subset of $\Omega_n$ and $1\<i,j\<n$. 
We say that $A_{i,j}$ is an \textit{arc of $X$} of $\Omega_n$ if $A_{i,j}\subseteq X$. 
An arc $A_{i,j}$ of $X$ not properly contained in any other arc of $X$ is called a \textit{maximal arc of $X$}.

These concepts allow us to present the following characterizations of $\DPW_n^-$. 

\begin{proposition}\label{char}
Let $\alpha\in\I_n$. Then $\alpha\in\DPW_n^-$ if and only if
\begin{enumerate}
\item $\alpha$ maps maximal arcs of $\dom(\alpha)$ onto maximal arcs of $\im(\alpha)$, and 
\item $\alpha|_{A_{i,j}}\in\DI_n$, for any arc $A_{i,j}$ of $\dom(\alpha)$ with $1\<i,j\<n$  
\end{enumerate}
if and only if
\begin{enumerate}
\item[1'.] $\alpha$ maps maximal arcs of $\dom(\alpha)$ onto maximal arcs of $\im(\alpha)$, and 
\item[2'.] $\alpha|_{A_{i,j}}\in\DI_n$, for any maximal arc $A_{i,j}$of $\dom(\alpha)$ with $1\<i,j\<n$. 
\end{enumerate}
\end{proposition}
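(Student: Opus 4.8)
The plan is to work throughout with the distances in $W_n$ between vertices lying in $\Omega_n$: from the list describing $\d=\d_{W_n}$ above, for $i,j\in\Omega_n$ we have $\d(i,j)=0$ when $i=j$, $\d(i,j)=1$ when $i$ and $j$ are consecutive modulo $n$, and $\d(i,j)=2$ otherwise. Thus $\alpha\in\I_n$ belongs to $\DPW_n^-$ precisely when $\alpha$ preserves, on $\dom(\alpha)$, both cyclic adjacency and cyclic non-adjacency of vertices. I would also dispose at once of the equivalence of the two lists of conditions: $1$ and $1'$ are literally the same, and since every arc of $\dom(\alpha)$ is contained in a maximal arc of $\dom(\alpha)$ while $\DI_n$ is closed under restrictions (if $\beta=\sigma|_D$ with $\sigma\in\D_{2n}$, then $\beta|_A=\sigma|_{A\cap D}\in\DI_n$), condition $2'$ implies condition $2$, the reverse implication being trivial. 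So it remains to prove that $\alpha\in\DPW_n^-$ if and only if $1'$ and $2'$ hold.

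For the sufficiency of $1'$ and $2'$, I would check $\d(i\alpha,j\alpha)=\d(i,j)$ for all $i,j\in\dom(\alpha)$ by cases. The case $i=j$ is clear. If $i,j$ are cyclically adjacent, they lie in a common maximal arc $B$ of $\dom(\alpha)$, so $\alpha|_B=\sigma|_B$ for some $\sigma\in\D_{2n}$ by $2'$; since $\D_{2n}$ acts on $\Omega_n$ by automorphisms of $C_n$, $i\alpha=i\sigma$ and $j\alpha=j\sigma$ are cyclically adjacent. If $i\neq j$ are not cyclically adjacent but, for contradiction, $i\alpha$ and $j\alpha$ were, then $i\alpha,j\alpha$ lie in a common maximal arc of $\im(\alpha)$, which by $1'$ equals $\alpha(B)$ for some maximal arc $B$ of $\dom(\alpha)$; injectivity of $\alpha$ forces $i,j\in B$, and applying $\sigma^{-1}$ to $\alpha|_B=\sigma|_B$ (with $\sigma\in\D_{2n}$) would make $i,j$ cyclically adjacent, a contradiction.

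For the necessity, take $\alpha\in\DPW_n^-$ and a maximal arc $B$ of $\dom(\alpha)$. Listing the elements of $B$ in their cyclic order, consecutive ones are cyclically adjacent, hence so are their images; as $\alpha$ is injective, the resulting walk on $C_n$ whose steps are all $\pm1$ is monotone (for $B=\Omega_n$ the same applies to the closed walk), so $\alpha|_B$ is the restriction of a rotation or of a reflection; thus $\alpha|_B\in\DI_n$ (condition $2'$) and, in particular, $\alpha(B)$ is an arc contained in $\im(\alpha)$. To obtain condition $1'$ I would show each $\alpha(B)$ is a \emph{maximal} arc of $\im(\alpha)$: otherwise $\alpha(B)$ sits properly inside a maximal arc of $\im(\alpha)$, so there are $i\in B$ and $i'\in\dom(\alpha)$ with $i'\alpha\notin\alpha(B)$ and $i'\alpha$ cyclically adjacent to $i\alpha$; then $i'\notin B$ and, using $i'\in\dom(\alpha)$ together with the maximality of $B$, $i'$ is not cyclically adjacent to $i$, so $\d(i,i')=2$ while $\d(i\alpha,i'\alpha)=1$, contradicting $\alpha\in\DPW_n^-$. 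Finally, since the maximal arcs of $\dom(\alpha)$ partition $\dom(\alpha)$ and distinct maximal arcs of a subset of $\Omega_n$ are disjoint, the arcs $\alpha(B)$, as $B$ runs over the maximal arcs of $\dom(\alpha)$, are exactly the maximal arcs of $\im(\alpha)$, which is condition $1'$.

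The step I expect to be the real obstacle is establishing condition $1'$ in the necessity part: condition $2'$ already follows from preservation of distance $1$ alone, so the genuine content of the proposition is that $\alpha$ cannot enlarge the image of a maximal arc, and ruling this out is exactly where preservation of distance $2$ enters, together with a careful analysis of the endpoints of the arcs involved and of the maximality hypothesis. The remaining ingredients — an injective $\pm1$-step walk on $C_n$ of length at most $n$ is monotone, distinct maximal arcs of a subset of $\Omega_n$ are disjoint, and $\DI_n$ is closed under restrictions — are elementary.
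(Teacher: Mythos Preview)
Your proof is correct and follows essentially the same strategy as the paper: both reduce membership in $\DPW_n^-$ to preservation of cyclic adjacency and non-adjacency on $\Omega_n$, use injectivity together with distance-$1$ preservation to show that $\alpha$ restricted to any arc is a restriction of an element of $\D_{2n}$, and use distance-$2$ preservation to handle pairs lying in distinct maximal arcs in the sufficiency direction. The one noteworthy difference is in establishing condition~$1'$ in the necessity direction: the paper first shows that $\alpha$ maps arcs to arcs and then invokes $\alpha^{-1}\in\DPW_n^-$ (so $\alpha^{-1}$ also maps arcs to arcs) to conclude that maximal arcs go to maximal arcs, whereas you argue directly that if $\alpha(B)$ were not maximal one would find $i\in B$ and $i'\in\dom(\alpha)\setminus B$ with $i\alpha,i'\alpha$ cyclically adjacent but $i,i'$ not, violating distance-$2$ preservation. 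Both arguments are sound; the paper's $\alpha^{-1}$ trick is a touch slicker, while your direct argument makes explicit that condition~$1'$ is precisely where distance-$2$ preservation is needed in the forward implication.
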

\begin{proof} 
First, suppose that $\alpha\in\DPW_n^-$. 

Observe that, for $1\<k\<n$ such that $k,k+1\in\dom(\alpha)$, $\d(k\alpha,(k+1)\alpha)=\d(k,k+1)=1$ and so we have $(k+1)\alpha\in\{k\alpha-1,k\alpha+1\}$. 

Let $1\<i,j\<n$ and suppose that $A_{i,j}$ is an arc of $\dom(\alpha)$. 
Since $\alpha$ is injective, by the previous observation, it is easy to deduce that, 
if $i\leqslant j$ then 
$$
\alpha|_{A_{i,j}}=
\begin{pmatrix} 
i&i+1&\cdots&j\\ 
i\alpha&i\alpha+1&\cdots&i\alpha+j-i
\end{pmatrix}
=g^{i\alpha-i}|_{A_{i,j}}
$$
or
$$
\alpha|_{A_{i,j}}=\begin{pmatrix} 
i&i+1&\cdots&j\\ 
i\alpha&i\alpha-1&\cdots&i\alpha-j+i
\end{pmatrix}
=hg^{i\alpha+i-1}|_{A_{i,j}},
$$ 
and if $j<i$ then 
$$
\alpha|_{A_{i,j}}=
\begin{pmatrix} 
i&\cdots&n&1&\cdots&j\\ 
i\alpha&\cdots&i\alpha+n-i&i\alpha+n-i+1&\cdots&i\alpha+n-i+j
\end{pmatrix}
=g^{i\alpha-i}|_{A_{i,j}}
$$
or
$$
\alpha|_{A_{i,j}}=\begin{pmatrix} 
i&\cdots&n&1&\cdots&j\\ 
i\alpha&\cdots&i\alpha-n+i&i\alpha-n+i-1&\cdots&i\alpha-n+i-j
\end{pmatrix}
=hg^{i\alpha+i-1}|_{A_{i,j}}. 
$$ 
Thus, $\alpha|_{A_{i,j}}\in\DI_n$. 
Moreover, we can also conclude that $\alpha$ maps arcs of $\dom(\alpha)$ onto arcs of $\im(\alpha)$. 
Since we took an arbitrary $\alpha\in\DPW_n^-$, we can also conclude that  $\alpha^{-1}$ maps arcs of $\dom(\alpha^{-1})=\im(\alpha)$ 
onto arcs of $\im(\alpha^{-1})=\dom(\alpha)$ and, therefore, it is easy to deduce that $\alpha$ maps maximal arcs of $\dom(\alpha)$ onto maximal arcs of $\im(\alpha)$. 

Hence, we have proved Conditions 1 and 2. Since Conditions 1 and 2 imply trivially Conditions 1' and 2', 
it remains to show that these last two conditions imply that $\alpha\in\DPW_n^-$. 

\smallskip 

So, let us suppose that Conditions 1' and 2' are true.  
Let $x,y\in\dom(\alpha)$ and, without loss of generalization, suppose that $x<y$. 
If $x$ and $y$ belong to the same maximal arc $A$ of $\dom(\alpha)$ then $\d(x,y)=\d(x\alpha,y\alpha)$, since $\alpha|_A\in\DI_n\subseteq\DPW_n^-$. 
On the other hand, suppose that $x$ and $y$ belong to distinct maximal arcs of $\dom(\alpha)$. Then, $1<y-x<n-1$ and so $\d(x,y)=2$. 
Suppose, by contradiction, that $\d(x\alpha,y\alpha)=1$. Then $y\alpha=x\alpha-1$ or $y\alpha=x\alpha+1$ or $\{x\alpha,y\alpha\}=\{1,n\}$ and so 
$x\alpha$ and $y\alpha$ must belong to the same maximal arc $B$ of $\im(\alpha)$. Let $A$ be the maximal arc of $\dom(\alpha)$ containing $x$. 
As $\alpha$ maps maximal arcs of $\dom(\alpha)$ onto maximal arcs of $\im(\alpha)$, we have $A\alpha=B$. Then $y\alpha\in B=A\alpha$ and so $y\in A$, 
which is a contradiction. Thus, $\d(x\alpha,y\alpha)=2=\d(x,y)$ and so $\alpha\in\DPW_n^-$, as required.  
\end{proof}

Observe that, from Proposition \ref{char}, it is easy to deduce that $\DPW_4^-=\DI_4$ and $\DPW_5^-=\DI_5$. 

\smallskip 

Now, notice that, obviously, $J_0=\{\emptyset\}$ and $J_{n+1}$ is the group of units of $\DPW_n$. 
Moreover, $J_{n+1}$ is also the group of units of $\DPW_n^+$ and 
$J_n^-=\{\alpha\in J_n\mid 0\not\in\dom(\alpha)\}=\{\alpha\in\DPW_n^-\mid\rank(\alpha)=n\}$ is the group of units of $\DPW_n^-$. 
Furthermore, as a consequence of Proposition \ref{char} and regarding that $J_n^-=J_{n+1}\Psi$, 
where $\Psi: \DPW_n^+\longrightarrow\DPW_n^-$ is the isomorphism defined above, we immediately have the following corollary. 

\begin{corollary}\label{gpu}
One has $J_n^-=\D_{2n}$ and $J_{n+1}\simeq\D_{2n}$.  
\end{corollary}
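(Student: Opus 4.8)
The plan is to prove the two assertions $J_n^-=\D_{2n}$ and $J_{n+1}\simeq\D_{2n}$ using Proposition \ref{char} together with the isomorphism $\Psi\colon\DPW_n^+\to\DPW_n^-$ and the fact that $J_n^-=J_{n+1}\Psi$, which is already noted in the excerpt. Since $\Psi$ is an isomorphism sending $J_{n+1}$ onto $J_n^-$, once $J_n^-=\D_{2n}$ is established the second statement $J_{n+1}\simeq\D_{2n}$ follows immediately; so the whole task reduces to identifying the group of units of $\DPW_n^-$.

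First I would recall that we already have the inclusion $\D_{2n}\subseteq\DPW_n^-$ (observed just before the statement of Proposition \ref{char}), and since every element of $\D_{2n}$ is a permutation of $\Omega_n$, it has rank $n$ and does not involve the vertex $0$; hence $\D_{2n}\subseteq J_n^-$. For the reverse inclusion, take $\alpha\in J_n^-$, so $\alpha\in\DPW_n^-$ with $\rank(\alpha)=n$, i.e. $\dom(\alpha)=\im(\alpha)=\Omega_n$. Then $\dom(\alpha)=\Omega_n$ is itself an arc of $\Omega_n$: indeed $\Omega_n=A_{1,n}$, so $\Omega_n$ is the unique maximal arc of $\dom(\alpha)$. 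Applying Condition 2 (or 2') of Proposition \ref{char} with this maximal arc $A_{i,j}=A_{1,n}=\Omega_n$, we get $\alpha=\alpha|_{\Omega_n}\in\DI_n$. But an element of $\DI_n$ of rank $n$ is a full permutation of $\Omega_n$ that is the restriction of some $\sigma\in\D_{2n}$; since its domain is all of $\Omega_n$, it equals $\sigma$ itself, so $\alpha\in\D_{2n}$. This proves $J_n^-\subseteq\D_{2n}$, hence equality.

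Finally, for $J_{n+1}\simeq\D_{2n}$, I would invoke that $\Psi$ restricts to a semigroup isomorphism $J_{n+1}\to J_n^-$ (as noted, $J_n^-=J_{n+1}\Psi$ and $\Psi$ is injective, being an isomorphism of $\DPW_n^+$ onto $\DPW_n^-$), and both $J_{n+1}$ and $J_n^-$ are groups (they are the respective groups of units), so this is a group isomorphism; combined with $J_n^-=\D_{2n}$ we conclude $J_{n+1}\simeq\D_{2n}$.

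There is essentially no serious obstacle here: the only point requiring a little care is the observation that $\Omega_n$ counts as an arc (so that Proposition \ref{char}(2) applies to a rank-$n$ element with the single maximal arc $\Omega_n$), and the elementary remark that a rank-$n$ element of $\DI_n$ must coincide with the full dihedral permutation it restricts. Everything else is bookkeeping about the isomorphism $\Psi$, which the excerpt has already set up.
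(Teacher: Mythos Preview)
Your proposal is correct and follows exactly the route the paper intends: the paper simply states the corollary as an immediate consequence of Proposition~\ref{char} together with the isomorphism $\Psi$ and $J_n^-=J_{n+1}\Psi$, and you have spelled out precisely those details (using $\Omega_n=A_{1,n}$ as the single maximal arc to invoke Proposition~\ref{char}, and noting that a rank-$n$ element of $\DI_n$ lies in $\D_{2n}$).
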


\section{Green's relations}\label{green}

In this section we will describe Green's relations of $\DPW_n^-$, $\DPW_n^+$, $\DPW_n^-\cup\DPW_n^+$ and $\DPW_n$. 
Remember that, given a set $\Omega$ and an inverse submonoid $M$ of $\I(\Omega)$, it is well known that 
the Green's relations $\mathscr{L}$, $\mathscr{R}$ and $\mathscr{H}$
of $M$ can be described as follows: for $\alpha, \beta \in M$,
\begin{itemize}
\item $\alpha \mathscr{L} \beta$ if and only if $\im(\alpha) = \im(\beta)$, 

\item $\alpha \mathscr{R} \beta$ if and only if $\dom(\alpha) = \dom(\beta)$, 

\item $\alpha \mathscr{H} \beta $ if and only if $\im(\alpha) = \im(\beta)$ and $\dom(\alpha) = \dom(\beta)$.
\end{itemize}
In $\I(\Omega)$ we also have 
\begin{itemize}
\item $\alpha \mathscr{J} \beta$ if and only if $|\dom(\alpha)| = |\dom(\beta)|$ (if and only if $|\im(\alpha)| = |\im(\beta)|$). 
\end{itemize}

Since $\DPW_n^+$, $\DPW_n^-\cup\DPW_n^+$ and $\DPW_n$ are inverse submonoids of $\I(\Omega_n^0)$,
and $\DPW_n^-$ is an inverse submonoid of $\I_n$, 
it remains to find a description of their Green's relation $\mathscr{J}$. 
Recall that, for a finite monoid, we have $\mathscr{J}=\mathscr{D} \;(=\mathscr{L}\circ\mathscr{R}=\mathscr{R}\circ\mathscr{L}$). 

\medskip 

Next, we fix the following notation. We represent by  
$$
\alpha=\begin{pmatrix}
A_1 & A_2 & \cdots & A_\ell \\
B_1 & B_2 & \cdots & B_\ell
\end{pmatrix} 
$$ 
($0\<\ell\<\lfloor\frac{n}{2}\rfloor$) an element $\alpha\in\DPW_n^-$ such that:
\begin{itemize}
\item[--] $A_1,A_2,\ldots,A_\ell$ are the maximal arcs of $\dom(\alpha)$; 
\item[--] $B_i=A_i\alpha$ for $1\<i\<\ell$. 
\end{itemize}
Notice that $B_1,B_2,\ldots,B_\ell$ must be the maximal arcs of $\im(\alpha)$. 
In this notation, in general, we do not require any order on the maximal arcs $A_1,A_2,\ldots,A_\ell$ of $\dom(\alpha)$. 
However, sometimes it is convenient, without loss of generalization, to consider them ordered by $\min(A_1)<\min(A_2)<\cdots<\min(A_\ell)$. 
Observe that, in this last case, if $1\in\dom(\alpha)$ 
[respectively, $1,n\in\dom(\alpha)$] then $1\in A_1$ [respectively, $1,n\in A_1$].

\smallskip 

Let $1\<i,j,r,s\<n$ be such that $|A_{i,j}|=|A_{r,s}|$. Then, it is easy to check that 
\begin{equation}\label{arcs} 
A_{i,j}g^{r-i}=A_{r,s}=A_{i,j}hg^{s+i-1}
\end{equation}
and, moreover, if $\alpha\in\DPW_n^-$ is such that $A_{i,j}$ is an arc of $\dom(\alpha)$ and $A_{i,j}\alpha=A_{r,s}$ then  
$\alpha|_{A_{i,j}}=g^{r-i}|_{A_{i,j}}$ 
(and, in this case, we say $\alpha$ \textit{preserves the orientation of the arc $A_{i,j}$}) 
or $\alpha|_{A_{i,j}}=hg^{s+i-1}|_{A_{i,j}}$ (and, in this case, we say that $\alpha$ \textit{reverses the orientation of the arc $A_{i,j}$}). 

\smallskip 

Let $\alpha=\left(\begin{smallmatrix}
A_1 & A_2 & \cdots & A_\ell \\
B_1 & B_2 & \cdots & B_\ell
\end{smallmatrix}\right)\in\DPW_n^-$. 
We define the $\mathscr{J}$-\textit{type} of $\alpha$ to be the sequence $(|A_{1\tau}|,|A_{2\tau}|,\ldots,|A_{\ell\tau}|)$, with 
$|A_{1\tau}|\<|A_{2\tau}|\<\cdots\<|A_{\ell\tau}|$ for some permutation $\tau$ of $\{1,\ldots,\ell\}$. 
This notion allows us to give the following description of the Green's relation $\mathscr{J}$ of $\DPW_n^-$. 

\begin{theorem}\label{J-}
Let $\alpha,\beta\in\DPW_n^-$. Then, $\alpha\mathscr{J}\beta$ in $\DPW_n^-$ if and only if $\alpha$ and $\beta$ have the same $\mathscr{J}$-type.
\end{theorem}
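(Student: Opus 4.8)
The plan is to prove both implications directly, using the explicit descriptions of Green's relations for inverse submonoids recalled at the start of the section, together with the normal form $\alpha=\left(\begin{smallmatrix}A_1 & \cdots & A_\ell\\ B_1 & \cdots & B_\ell\end{smallmatrix}\right)$ and the structure provided by Proposition \ref{char}. Since $\DPW_n^-$ is a finite monoid, $\mathscr{J}=\mathscr{D}=\mathscr{L}\circ\mathscr{R}$, so it suffices to show that the $\mathscr{J}$-type is a complete invariant for $\mathscr{D}$.

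\textbf{The necessity.} First I would show that if $\alpha\mathscr{J}\beta$ then they share the same $\mathscr{J}$-type. By the $\mathscr{D}=\mathscr{L}\circ\mathscr{R}$ description, there is $\gamma\in\DPW_n^-$ with $\alpha\mathscr{R}\gamma\mathscr{L}\beta$, i.e. $\dom(\alpha)=\dom(\gamma)$ and $\im(\gamma)=\im(\beta)$. Since $\dom(\alpha)=\dom(\gamma)$, these two partial isometries have exactly the same maximal arcs in their domain; and since $\gamma$, by Condition 1' of Proposition \ref{char}, maps each maximal arc of $\dom(\gamma)$ bijectively onto a maximal arc of $\im(\gamma)=\im(\beta)$ preserving cardinality, the multiset of sizes of maximal arcs of $\dom(\gamma)$ coincides with the multiset of sizes of maximal arcs of $\im(\beta)$. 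Running the same argument on $\alpha$ and on $\beta$ separately (each maps maximal arcs of its domain onto maximal arcs of its image with equal sizes), we conclude that the multiset $\{|A_1|,\ldots,|A_\ell|\}$ attached to $\alpha$ equals the multiset attached to $\gamma$ equals the multiset of image-arc sizes of $\beta$, which equals the multiset $\{|A_1'|,\ldots,|A_{\ell'}'|\}$ attached to $\beta$. Sorting these multisets in non-decreasing order yields exactly the $\mathscr{J}$-type, so $\alpha$ and $\beta$ have the same $\mathscr{J}$-type; in particular $\ell=\ell'$.

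\textbf{The sufficiency.} Conversely, suppose $\alpha=\left(\begin{smallmatrix}A_1 & \cdots & A_\ell\\ B_1 & \cdots & B_\ell\end{smallmatrix}\right)$ and $\beta=\left(\begin{smallmatrix}C_1 & \cdots & C_\ell\\ D_1 & \cdots & D_\ell\end{smallmatrix}\right)$ have the same $\mathscr{J}$-type $(t_1,\ldots,t_\ell)$. After relabelling the maximal arcs (which does not change membership in $\DPW_n^-$), I may assume $|A_i|=|C_i|=t_i$ for $1\le i\le\ell$. The idea is to build $\gamma\in\DPW_n^-$ with $\dom(\gamma)=\dom(\alpha)$ and $\im(\gamma)=\dom(\beta)$, so that $\alpha\mathscr{R}\gamma$, and $\gamma^{-1}\beta$ is well-behaved — better, I would directly construct $\gamma$ with $\dom(\gamma)=\dom(\alpha)=A_1\cup\cdots\cup A_\ell$ and $\im(\gamma)=\dom(\beta)=C_1\cup\cdots\cup C_\ell$ by mapping each $A_i$ onto $C_i$ using the appropriate power $g^k$ (this is legitimate by formula \eqref{arcs}, since $|A_i|=|C_i|$). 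Such a $\gamma$ satisfies Conditions 1' and 2' of Proposition \ref{char} — each restriction $\gamma|_{A_i}=g^{k_i}|_{A_i}\in\DI_n$, and $\gamma$ sends maximal arcs of its domain onto maximal arcs of its image because the $C_i$ are precisely the maximal arcs of $\dom(\beta)$ and the $A_i$ are precisely those of $\dom(\alpha)$ — hence $\gamma\in\DPW_n^-$. Then $\dom(\gamma)=\dom(\alpha)$ gives $\gamma\mathscr{R}\alpha$, and $\im(\gamma)=\dom(\beta)$ together with $\gamma$ being a partial bijection onto $\dom(\beta)$ gives, upon composing, $\gamma\beta\in\DPW_n^-$ with $\im(\gamma\beta)=\im(\beta)$ and $\dom(\gamma\beta)=\dom(\gamma)=\dom(\alpha)$; thus $\alpha\mathscr{R}\gamma\beta\mathscr{L}\beta$, whence $\alpha\mathscr{D}\beta=\mathscr{J}\beta$.

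\textbf{Main obstacle.} The delicate point is the well-definedness check in the sufficiency direction: I must be sure that the map $\gamma$ assembled arc-by-arc via powers of $g$ is globally a partial isometry of $W_n$, not just a partial bijection each of whose arc-restrictions lies in $\DI_n$. This is exactly what the second characterization in Proposition \ref{char} (Conditions 1' and 2') guarantees, so the argument reduces to verifying that the maximal arcs of $\dom(\gamma)$ are the $A_i$ (true since $\dom(\gamma)=\dom(\alpha)$) and that they are carried onto the maximal arcs $C_i$ of $\im(\gamma)=\dom(\beta)$. A minor bookkeeping subtlety is the choice of which power $g^{k_i}$ to use so that the individual images $A_i\gamma$ are pairwise disjoint and exhaust $\dom(\beta)$; matching $A_i$ to $C_i$ and using $g^{\min(C_i)-\min(A_i)}$ handles this cleanly, and $\gamma\in\DI_n$ is not required (nor true in general) — only each restriction to a maximal arc need be, which is all Proposition \ref{char} demands.
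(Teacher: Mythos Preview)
Your argument is correct. The necessity direction is essentially the paper's argument rephrased through $\mathscr{D}=\mathscr{R}\circ\mathscr{L}$ rather than through the two-sided factorisation $\beta=\gamma\alpha\lambda$; both come down to observing that an element of $\DPW_n^-$ whose domain and image are prescribed must, by Proposition~\ref{char}, induce a size-preserving bijection between the families of maximal arcs.

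For sufficiency you take a genuinely cleaner route than the paper. The paper builds two transformations $\gamma,\lambda\in\DPW_n^-$ with $\beta=\gamma\alpha\lambda$, which forces it to track whether $\alpha$ and $\beta$ preserve or reverse orientation on each matched pair of arcs and to choose the orientation of $\gamma$ on each arc accordingly (four cases per arc). You instead exploit $\mathscr{J}=\mathscr{D}$ and only exhibit a single bridging element $\gamma\beta$ lying in the $\mathscr{R}$-class of $\alpha$ and the $\mathscr{L}$-class of $\beta$; since you only need $\dom(\gamma)=\dom(\alpha)$ and $\im(\gamma)=\dom(\beta)$, any orientation on each arc will do, and the bookkeeping disappears. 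What the paper's approach buys is an explicit equation $\beta=\gamma\alpha\lambda$, but for the statement at hand your shortcut is entirely sufficient and more economical.
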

\begin{proof}
Let $\alpha=\left(\begin{smallmatrix}
A_1 & A_2 & \cdots & A_\ell \\
B_1 & B_2 & \cdots & B_\ell
\end{smallmatrix}\right)$ 
and
$\beta=\left(\begin{smallmatrix}
A'_1 & A'_2 & \cdots & A'_{\ell'} \\
B'_1 & B'_2 & \cdots & B'_{\ell'}
\end{smallmatrix}\right)$. 
Let $\tau$ and $\tau'$ be permutations of $\{1,\ldots,\ell\}$ and $\{1,\ldots,\ell'\}$, respectively, such that 
$|A_{1\tau}|\<|A_{2\tau}|\<\cdots\<|A_{\ell\tau}|$ and $|A'_{1\tau'}|\<|A'_{2\tau'}|\<\cdots\<|A'_{\ell'\tau'}|$. 

\smallskip 

First, suppose that $\alpha\mathscr{J}\beta$ in $\DPW_n^-$. 
Then, $\alpha\mathscr{J}\beta$ in $\I_n$ and so $\rank(\alpha)=\rank(\beta)$. 
Let $\gamma,\lambda\in\DPW_n^-$ be such that $\beta=\gamma\alpha\lambda$. 
Since $\beta=(\gamma\alpha\alpha^{-1})\alpha\lambda$, 
we may suppose, without loss of generalization, that we took $\gamma\in\DPW_n^-$ with $\rank(\gamma)=\rank(\alpha)=\rank(\beta)$. 
Hence, from $\beta=\gamma\alpha\lambda$, it follows that $\dom(\gamma)=\dom(\beta)$ and $\im(\gamma)=\dom(\alpha)$. 
Thus, $\ell=\ell'$ and 
$\gamma=\left(\begin{smallmatrix}
A'_1 & A'_2 & \cdots & A'_\ell \\
A_{1\xi} & A_{2\xi} & \cdots & A_{\ell\xi}
\end{smallmatrix}\right)$  
 for some permutation $\xi$ of $\{1,\ldots,\ell\}$. 
Therefore, 
$|A'_{1\tau'}|=|A_{1\tau'\xi}|\<|A'_{2\tau'}|=|A_{2\tau'\xi}|\<\cdots\<|A'_{\ell\tau'}|=|A_{\ell\tau'\xi}|$, 
whence $\alpha$ and $\beta$ have the same $\mathcal{J}$-type. 

\smallskip 

Conversely, suppose that $\alpha$ and $\beta$ have the same $\mathcal{J}$-type.  
Then, $\ell=\ell'$ and 
$$
|B_{1\tau}|=|A_{1\tau}|=|A'_{1\tau'}|=|B'_{1\tau'}|\<|B_{2\tau}|=|A_{2\tau}|=|A'_{2\tau'}|=|B'_{2\tau'}|\<\cdots\<|B_{\ell\tau}|=|A_{\ell\tau}|=|A'_{\ell\tau'}|=|B'_{\ell\tau'}|.
$$ 
Hence, by Proposition \ref{char} and (\ref{arcs}) above, we may construct a transformation 
$\lambda=\left(\begin{smallmatrix}
B_{1\tau} & B_{2\tau} & \cdots & B_{\ell\tau} \\
B'_{1\tau'} & B'_{2\tau'} & \cdots & B'_{\ell\tau'}
\end{smallmatrix}\right)\in\DPW_n^-$ such that $\lambda$ preserves the orientation of $B_{i\tau}$ for $1\<i\<\ell$, 
and a transformation 
$\gamma=\left(\begin{smallmatrix}
A'_{1\tau'} & A'_{2\tau'} & \cdots & A'_{\ell\tau'} \\
A_{1\tau} & A_{2\tau} & \cdots & A_{\ell\tau}
\end{smallmatrix}\right)\in\DPW_n^-$ such that, for all $1\<i\<\ell$, $\gamma$ preserves the orientation of $A'_{i\tau'}$ if and only if either 
$\beta$ preserves the orientation of $A'_{i\tau'}$ and $\alpha$ preserves the orientation of $A_{i\tau}$ or 
$\beta$ reverses the orientation of $A'_{i\tau'}$ and $\alpha$ reverses the orientation of $A_{i\tau}$. 
Now, it is a routine matter to check that $\beta=\gamma\alpha\lambda$. 
Similarly, we can construct transformations $\gamma',\lambda'\in\DPW_n^-$ such that $\alpha=\gamma'\beta\lambda'$ and so 
$\alpha\mathscr{J}\beta$ in $\DPW_n^-$, as required.
\end{proof}

Taking into account the isomorphism $\Psi: \DPW_n^+\longrightarrow\DPW_n^-$, as an immediate consequence of Theorem \ref{J-}, we have the following corollary. 

\begin{corollary}\label{J+}
Let $\alpha,\beta\in\DPW_n^+$. Then, $\alpha\mathscr{J}\beta$ in $\DPW_n^+$ if and only if $\alpha\Psi$ and $\beta\Psi$ have the same $\mathscr{J}$-type.
\end{corollary}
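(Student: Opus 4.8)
The statement to be proved is Corollary \ref{J+}: for $\alpha,\beta\in\DPW_n^+$, one has $\alpha\mathscr{J}\beta$ in $\DPW_n^+$ if and only if $\alpha\Psi$ and $\beta\Psi$ have the same $\mathscr{J}$-type, where $\Psi\colon\DPW_n^+\longrightarrow\DPW_n^-$ is the isomorphism given by $\alpha\Psi=\alpha|_{\Omega_n}$. The plan is simply to transport Theorem \ref{J-} across the isomorphism $\Psi$, so the proof will be very short.

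First I would recall that an isomorphism of semigroups preserves all of Green's relations: if $\Psi$ is an isomorphism from $\DPW_n^+$ onto $\DPW_n^-$, then for $\alpha,\beta\in\DPW_n^+$ we have $\alpha\mathscr{J}\beta$ in $\DPW_n^+$ if and only if $\alpha\Psi\mathscr{J}\beta\Psi$ in $\DPW_n^-$. Then I would invoke Theorem \ref{J-} applied to the two elements $\alpha\Psi,\beta\Psi\in\DPW_n^-$: these are $\mathscr{J}$-related in $\DPW_n^-$ precisely when they have the same $\mathscr{J}$-type. Chaining these two equivalences yields exactly the claimed statement.

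There is essentially no obstacle here; the only thing one might want to spell out is the first equivalence. For completeness I would note that it holds because $\gamma\alpha\lambda=\beta$ (with $\gamma,\lambda$ ranging over $\DPW_n^+$ adjoined with an identity, or using that $\DPW_n^+$ is a monoid) is equivalent, upon applying $\Psi$, to $(\gamma\Psi)(\alpha\Psi)(\lambda\Psi)=\beta\Psi$, and $\Psi$ being a bijection onto $\DPW_n^-$ means $\gamma\Psi,\lambda\Psi$ range over all of $\DPW_n^-$; the symmetric condition is handled the same way, so $\alpha\mathscr{J}\beta$ in $\DPW_n^+$ iff $\alpha\Psi\mathscr{J}\beta\Psi$ in $\DPW_n^-$. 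Combined with Theorem \ref{J-}, the corollary follows at once, which is why it is stated as an immediate consequence.
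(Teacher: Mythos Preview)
Your proposal is correct and is exactly the approach the paper takes: it states the corollary as an immediate consequence of Theorem~\ref{J-} via the isomorphism $\Psi$, and you have simply made explicit the (standard) fact that isomorphisms preserve Green's relations.
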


Let $\alpha,\beta\in\DPW_n^-\cup\DPW_n^+$ and 
let $\gamma,\lambda\in\DPW_n^-\cup\DPW_n^+$ be such that $\alpha=\gamma\beta\lambda$. 
If $\alpha\in\DPW_n^+$ then, clearly, we have $\gamma,\beta,\lambda\in\DPW_n^+$. 
On the other hand, if $\alpha,\beta\in\DPW_n^-$ then $\alpha=(\gamma\Psi)\beta(\lambda\Psi)$. 
Thus, it is easy to conclude the following result. 

\begin{corollary}\label{J-+}
Let $\alpha,\beta\in\DPW_n^-\cup\DPW_n^+$. Then, $\alpha\mathscr{J}\beta$ in $\DPW_n^-\cup\DPW_n^+$ if and only if 
either $\alpha,\beta\in\DPW_n^-$ and $\alpha$ and $\beta$ have the same $\mathscr{J}$-type 
or 
$\alpha,\beta\in\DPW_n^+$ and $\alpha\Psi$ and $\beta\Psi$ have the same $\mathscr{J}$-type.
\end{corollary}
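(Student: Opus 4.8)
The plan is to deduce Corollary \ref{J-+} by reducing the $\mathscr{J}$-relation of $M := \DPW_n^-\cup\DPW_n^+$ to the already-known descriptions of $\mathscr{J}$ on $\DPW_n^-$ (Theorem \ref{J-}) and on $\DPW_n^+$ (Corollary \ref{J+}), using the structural fact, recalled just before the statement, that $\DPW_n^-$ is an ideal of $M$. First I would fix $\alpha,\beta\in M$ and $\gamma,\lambda\in M$ with $\alpha=\gamma\beta\lambda$, and argue by cases on where $\alpha$ and $\beta$ lie. The crucial observation is that membership in $\DPW_n^+$ versus $\DPW_n^-$ is detected by whether $0$ lies in the domain (equivalently, since elements of $\DPW_n^+$ fix $0$, in the image): if $\alpha\in\DPW_n^+$ then $0\in\dom(\alpha)$, and from $\alpha=\gamma\beta\lambda$ this forces $0\in\dom(\gamma)$, $0\in\dom(\beta\lambda)$, hence (tracing $0$ through the composition and using $0\gamma=0$, $0\beta=0$) it forces $\gamma,\beta,\lambda\in\DPW_n^+$; conversely if $\alpha\in\DPW_n^-$ then, since $\DPW_n^-$ is an ideal, the factorization $\alpha=\gamma\beta\lambda$ automatically lives inside the ideal in the sense that we may replace $\gamma,\lambda$ by their restrictions $\gamma\Psi,\lambda\Psi$ (or note directly that $\alpha\in\DPW_n^-$ and $\beta$ being $\mathscr{J}$-related to $\alpha$ forces $\beta\in\DPW_n^-$ as well, since otherwise $\beta\in\DPW_n^+$ would pull $\alpha$ into $\DPW_n^+$ by the first case).

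With that case split in hand, the proof writes itself. If $\alpha,\beta\in\DPW_n^+$, then $\alpha\mathscr{J}\beta$ in $M$ iff $\alpha\mathscr{J}\beta$ in $\DPW_n^+$ (because all the connecting elements $\gamma,\lambda$ in any $M$-factorization must also be in $\DPW_n^+$, and conversely a $\DPW_n^+$-factorization is an $M$-factorization), and the latter is equivalent, by Corollary \ref{J+}, to $\alpha\Psi$ and $\beta\Psi$ having the same $\mathscr{J}$-type. If $\alpha,\beta\in\DPW_n^-$, then since $\DPW_n^-$ is an ideal of $M$ and $\Psi$ shows every element of $\DPW_n^+$ acts on $\DPW_n^-$ the same way as its restriction, any $M$-factorization $\alpha=\gamma\beta\lambda$ yields $\alpha=(\gamma\Psi')\beta(\lambda\Psi')$ with $\gamma\Psi',\lambda\Psi'\in\DPW_n^-$ (where $\Psi'$ denotes $\Psi$ on $\DPW_n^+$ and the identity on $\DPW_n^-$); hence $\alpha\mathscr{J}\beta$ in $M$ iff $\alpha\mathscr{J}\beta$ in $\DPW_n^-$, which by Theorem \ref{J-} means $\alpha$ and $\beta$ have the same $\mathscr{J}$-type. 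Finally, the mixed cases $\alpha\in\DPW_n^+,\beta\in\DPW_n^-$ or vice versa cannot give $\alpha\mathscr{J}\beta$: a factorization $\alpha=\gamma\beta\lambda$ with $\beta\in\DPW_n^-$ forces $\alpha\in\DPW_n^-$ (ideal), contradicting $\alpha\in\DPW_n^+$ since $\DPW_n^-\cap\DPW_n^+=\emptyset$; symmetrically for the other direction. This yields exactly the stated dichotomy.

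The main obstacle, though it is a mild one, is making precise and correct the reduction "an $M$-factorization of an element of $\DPW_n^-$ may be taken inside $\DPW_n^-$." One has to be careful that $\DPW_n^-$, while an ideal of $M$, is not a submonoid of $\DPW_n$ (its identity $\id_{\Omega_n}$ differs from that of $\DPW_n$), so the cleanest route is to invoke the isomorphism $\Psi$: given $\gamma\in\DPW_n^+$ and $\beta\in\DPW_n^-$ one checks directly that $\gamma\beta=(\gamma\Psi)\beta$ and $\beta\gamma=\beta(\gamma\Psi)$ as partial transformations (both sides have domain $\dom(\beta)\cap\dom(\gamma)=\dom(\beta)\cap\dom(\gamma\Psi)$ inside $\Omega_n$ and agree there, since $0\notin\dom(\beta)$), so replacing every $\DPW_n^+$-factor of an $M$-factorization by its $\Psi$-image turns it into a genuine $\DPW_n^-$-factorization of the same element. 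Once this lemma-level bookkeeping is settled, everything else is a direct appeal to Theorem \ref{J-} and Corollary \ref{J+}, exactly as the phrase "it is easy to conclude" in the excerpt suggests.
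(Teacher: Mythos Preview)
Your approach is exactly the paper's: from $\alpha=\gamma\beta\lambda$ with $\alpha\in\DPW_n^+$ you force $\gamma,\beta,\lambda\in\DPW_n^+$, while for $\alpha,\beta\in\DPW_n^-$ you replace the factors by their restrictions via $\Psi$, then invoke Theorem~\ref{J-} and Corollary~\ref{J+}. One small slip: in your bookkeeping paragraph the domain of a composition such as $\gamma\beta$ is not $\dom(\beta)\cap\dom(\gamma)$ in general (it is $\{x\in\dom(\gamma):x\gamma\in\dom(\beta)\}$), but the conclusion $\gamma\beta=(\gamma\Psi)\beta$ still holds because $0\gamma=0\notin\dom(\beta)$ and $\gamma,\gamma\Psi$ agree on $\Omega_n$, so the argument goes through unchanged.
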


Now, recall that, by Lemma \ref{split}, if $\alpha\in\DPW_n\setminus(\DPW_n^-\cup\DPW_n^+)$ then $0\in\dom(\alpha)\cup\im(\alpha)$ and $\rank(\alpha)\<4$.  
Thus, for $\alpha,\beta\in\DPW_n$ such that $\rank(\alpha),\rank(\beta)\>5$, we have $\alpha\mathscr{J}\beta$ in $\DPW_n$ if and only if 
$\alpha\mathscr{J}\beta$ in $\DPW_n^-\cup\DPW_n^+$. 
It is also clear that $J_0$ and $J_1$ are $\mathscr{J}$-classes of $\DPW_n$. 

Let $1\<k\<n$ and let $\chi$ be a $\mathscr{J}$-type of an element of $\DPW_n^-$ of rank equal to $k$. 
Then, define 
$$
J_\chi^-=\{\alpha\in J_k\cap\DPW_n^-\mid \mbox{$\chi$ is the $\mathscr{J}$-type of $\alpha$}\}
\quad\text{and}\quad   
J_\chi^+=\{\alpha\in J_{k+1}\cap\DPW_n^+\mid \mbox{$\chi$ is the $\mathscr{J}$-type of $\alpha\Psi$}\}.
$$

It is easy to show that: if $\alpha\in J_2\setminus(\DPW_n^-\cup\DPW_n^+)$ (and so $0\in\dom(\alpha)\cup\im(\alpha)$ with $0\alpha\neq0$) then 
$$
\mbox{$\alpha\mathscr{J}\beta$ in $\DPW_n$ if and only if 
$\beta\in J_2\setminus(\DPW_n^-\cup\DPW_n^+)\cup J_{(2)}^-\cup J_{(1)}^+=J_2\setminus J_{(1,1)}^-$};
$$ 
and, similarly, if $\alpha\in J_3\setminus(\DPW_n^-\cup\DPW_n^+)$ and  $0\not\in\dom(\alpha)\cap\im(\alpha)$ 
then 
$$
\mbox{$\alpha\mathscr{J}\beta$ in $\DPW_n$  if and only if 
$\beta\in \{\gamma\in J_3\setminus(\DPW_n^-\cup\DPW_n^+)\mid 0\not\in\dom(\gamma)\cap\im(\gamma)\}\cup J_{(3)}^-\cup J_{(1,1)}^+$}. 
$$ 
On the other hand, it is a routine matter to show that: if $\alpha\in J_4\setminus(\DPW_n^-\cup\DPW_n^+)$ (and so $0\in\dom(\alpha)\cap\im(\alpha)$) 
then 
$$
\mbox{$\alpha\mathscr{J}\beta$ in $\DPW_n$ if and only if 
$\beta\in J_4\setminus(\DPW_n^-\cup\DPW_n^+)\cup J_{(3)}^+$};
$$ 
and, similarly, if $\alpha\in J_3\setminus(\DPW_n^-\cup\DPW_n^+)$ and  $0\in\dom(\alpha)\cap\im(\alpha)$ 
then 
$$
\mbox{$\alpha\mathscr{J}\beta$ in $\DPW_n$ if and only if 
$\beta\in \{\gamma\in J_3\setminus(\DPW_n^-\cup\DPW_n^+)\mid 0\in\dom(\gamma)\cap\im(\gamma)\}\cup J_{(2)}^+$}. 
$$

Let $J'_4=J_4\setminus(\DPW_n^-\cup\DPW_n^+)\cup J_{(3)}^+$, 
$J'_3=\{\gamma\in J_3\setminus(\DPW_n^-\cup\DPW_n^+)\mid 0\not\in\dom(\gamma)\cap\im(\gamma)\}\cup J_{(3)}^-\cup J_{(1,1)}^+$,
$J''_3=\{\gamma\in J_3\setminus(\DPW_n^-\cup\DPW_n^+)\mid 0\in\dom(\gamma)\cap\im(\gamma)\}\cup J_{(2)}^+$ and 
$J'_2= J_2\setminus(\DPW_n^-\cup\DPW_n^+)\cup J_{(2)}^-\cup J_{(1)}^+$.  
Now, it is easy to conclude the following description of the Green's relation $\mathscr{J}$ of $\DPW_n$. 

\begin{theorem}\label{J}
\begin{enumerate}
\item Let $\alpha,\beta\in J_k$ with $k\>5$.  Then, 
$\alpha\mathscr{J}\beta$ in $\DPW_n$ if and only if 
either $\alpha,\beta\in\DPW_n^-$ and $\alpha$ and $\beta$ have the same $\mathscr{J}$-type 
or 
$\alpha,\beta\in\DPW_n^+$ and $\alpha\Psi$ and $\beta\Psi$ have the same $\mathscr{J}$-type. 

\item $J'_4$, $J_{(4)}^-$, $J_{(1,3)}^-$, $J_{(2,2)}^-$, $J_{(1,1,2)}^-$, $J_{(1,1,1,1)}^-$, 
$J_{(1,2)}^+$ and $J_{(1,1,1)}^+$ are the $\mathscr{J}$-classes of $\DPW_n$ of elements of rank equal to $4$.

\item $J'_3$, $J''_3$, $J_{(1,2)}^-$ and 
$J_{(1,1,1)}^-$ are the $\mathscr{J}$-classes of $\DPW_n$ of elements of rank equal to $3$.

\item $J'_2$ and $J_{(1,1)}^-$  are the $\mathscr{J}$-classes of $\DPW_n$ of elements of rank equal to $2$. 

\item $J_1$ and $J_0$ are the $\mathscr{J}$-classes of $\DPW_n$ of elements of rank equal to $1$ and $0$, respectively. 
\end{enumerate}
\end{theorem}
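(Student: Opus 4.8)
The plan is to obtain the whole statement from the descriptions of Green's relation $\mathscr{J}$ already established on $\DPW_n^-$, $\DPW_n^+$ and $\DPW_n^-\cup\DPW_n^+$ (Theorem \ref{J-} and Corollaries \ref{J+} and \ref{J-+}), treating separately those elements of $\DPW_n$ that lie outside $\DPW_n^-\cup\DPW_n^+$, all of which, by Properties 1 and 2 of Lemma \ref{split}, have rank at most $4$. Thus the argument amounts to organising the four displayed observations preceding the statement together with the structure results on $\DPW_n^\pm$.

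For the first assertion I would start from the remark that Property 4 of Lemma \ref{split} forces $J_k\subseteq\DPW_n^-\cup\DPW_n^+$ as soon as $k\geqslant5$: such an $\alpha$ either omits $0$ from both $\dom(\alpha)$ and $\im(\alpha)$, or fixes $0$. It then remains to check that $\mathscr{J}$ in $\DPW_n$ and $\mathscr{J}$ in $\DPW_n^-\cup\DPW_n^+$ agree on elements of rank at least $5$; one inclusion is automatic, and for the other, if $\alpha,\beta\in J_k$ with $k\geqslant5$ and $\beta=\gamma\alpha\lambda$ in $\DPW_n$, then (exactly as in the proof of Theorem \ref{J-}) replacing $\gamma$ by $\gamma\alpha\alpha^{-1}$ and $\lambda$ by $\alpha^{-1}\alpha\lambda$ keeps $\beta=\gamma\alpha\lambda$ and forces $\rank(\gamma)=\rank(\lambda)=k\geqslant5$, so that $\gamma,\lambda\in\DPW_n^-\cup\DPW_n^+$; a symmetric argument for the factorisation of $\alpha$ through $\beta$ then yields $\alpha\mathscr{J}\beta$ in $\DPW_n^-\cup\DPW_n^+$. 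The first assertion now follows from Corollary \ref{J-+}.

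For the remaining assertions I would fix $k\leqslant4$ and split $J_k$ into the three pieces $J_k\cap\DPW_n^-$, $J_k\cap\DPW_n^+$ and $J_k\setminus(\DPW_n^-\cup\DPW_n^+)$. Inside $\DPW_n^-$ the $\mathscr{J}$-classes are, by Theorem \ref{J-}, exactly the sets $J_\chi^-$ with $\chi$ a $\mathscr{J}$-type realised by a rank-$k$ element, i.e.\ a nondecreasing sequence of arc-lengths summing to $k$ (which one lists directly via Proposition \ref{char}); inside $\DPW_n^+$, by Corollary \ref{J+} and the isomorphism $\Psi$, they are the $J_\chi^+$ with $\chi$ summing to $k-1$. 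The substantive step is to decide which of these absorb the exotic elements of $J_k\setminus(\DPW_n^-\cup\DPW_n^+)$: for such an $\alpha$ one uses Properties 1--3 of Lemma \ref{split} to locate $0$ relative to $\dom(\alpha)$ and $\im(\alpha)$ (for $k=4$ one is forced into $0\in\dom(\alpha)\cap\im(\alpha)$; for $k=3$ one distinguishes whether or not $0\in\dom(\alpha)\cap\im(\alpha)$; for $k=2$ the vertex $0$ lies in at least one of $\dom(\alpha),\im(\alpha)$), and then exhibits explicit partial isometries $\gamma$ and $\lambda$ — with prescribed domains, images and arc-orientations, assembled from $g$, $h$ and their restrictions using (\ref{arcs}) and Proposition \ref{char} — realising $\beta=\gamma\alpha\lambda$ for the required targets among $J_{(3)}^+$, $J_{(2)}^+$, $J_{(1,1)}^+$, $J_{(1)}^+$ and $J_{(3)}^-$. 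Conversely one argues that an exotic $\alpha$ cannot be $\mathscr{J}$-equivalent to an element of the surviving classes $J_{(1,2)}^-$, $J_{(1,1,1)}^-$, $J_{(1,1)}^-$ — the point being that $0$ is adjacent to every other vertex, so the presence of $0$ in $\dom(\alpha)$ or $\im(\alpha)$ forces adjacencies that these classes, by Proposition \ref{char}, do not admit. Collecting these mergers produces exactly $J'_4$, $J'_3$, $J''_3$ and $J'_2$, and the untouched $J_\chi^\pm$ give the lists in the second, third and fourth assertions; the last assertion is immediate, since $J_0=\{\emptyset\}$ is a singleton two-sided ideal and $J_1$ is a single $\mathscr{J}$-class, any two singletons of $\Omega_n^0$ being linked through $\DPW_n$ by the obvious maps $\binom{i}{j}$, which are partial isometries because their domain consists of a single vertex.

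The main obstacle is not the first assertion but the case analysis for $k\in\{2,3,4\}$: one must construct, uniformly in $n\geqslant4$, concrete $\gamma$ and $\lambda$ with precisely the right domains, images and orientations so that $\gamma\alpha\lambda$ equals each prescribed low-rank target, and then rule out every further collapse. This is exactly the content of the four displayed observations preceding the statement; the verifications are routine in spirit but delicate in bookkeeping, and that is where most of the effort lies.
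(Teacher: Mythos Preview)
Your proposal is correct and follows essentially the same approach as the paper. In fact the paper gives no formal proof at all: after recording the four displayed observations about exotic elements of ranks $2$, $3$ and $4$, it simply states that the theorem is ``easy to conclude'', so your sketch --- reduction to $\DPW_n^-\cup\DPW_n^+$ via Lemma~\ref{split} for rank $\geqslant5$, then the displayed case analysis for small ranks --- is exactly the intended argument, only more explicit.

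One small point worth tightening: your ``rule out every further collapse'' step is phrased only as exotic-versus-surviving, but you also need that the surviving $J_\chi^-$ and $J_{\chi'}^+$ stay separate from one another in $\DPW_n$ at ranks $\leqslant4$. This is easy --- the factorisation trick you used for $k\geqslant5$ works verbatim for $\alpha,\beta\in\DPW_n^-$ at any rank (since $\dom(\gamma)=\dom(\beta)$ and $\im(\gamma)=\dom(\alpha)$ both miss $0$, forcing $\gamma\in\DPW_n^-$), and at rank $4$ Property~3 of Lemma~\ref{split} prevents any $\gamma$ from linking $\DPW_n^-$ to $\DPW_n^+$; then, since the displayed observations already identify $J'_4$, $J'_3$, $J''_3$ and $J'_2$ as full $\mathscr{J}$-classes, the remaining $J_\chi^\pm$ are forced to be classes by partition. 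This is routine, but worth saying explicitly.
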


Notice that, for small $n$'s some sets of the statement of Theorem \ref{J} may not exist. 
Concretely, we have: 
$J_{(1,2)}^-$ and $J_{(1,2)}^+$ only for $n\>5$;  
$J_{(1,1,1)}^-$, $J_{(1,1,1)}^+$, $J_{(1,3)}^-$ and $J_{(2,2)}^-$ only for $n\>6$; 
$J_{(1,1,2)}^-$ only for $n\>7$; and 
$J_{(1,1,1,1)}^-$ only for $n\>8$. 

\section{Generators and rank of $\DPW_n^-$} \label{gr-}

Given a set $\Omega$ and a subset $X$ of $\Omega$, we denote by $\id_X$ the partial identity with domain $X$, 
i.e. $\id_X$ is the partial transformation of $\Omega$ such $\dom(\id_X)=X$ and $(x)\id_X=x$ for $x\in X$. 

\smallskip 

Let us consider the following transformations of $\DPW_n^-$: 
$$
e=\begin{pmatrix}
1&2&\cdots&n-1\\
1&2&\cdots&n-1
\end{pmatrix}
\quad\text{and}\quad
c_j=\left(\begin{array}{cccc|ccc}
1&2&\cdots&j+1&j+3&\cdots&n-1\\ 
1&2&\cdots&j+1&n-1&\cdots&j+3
\end{array}\right) \mbox{~for $1\<j\<\lfloor\frac{n}{2}\rfloor-2$}
$$
(observe that the transformations $c_j$'s only exist for $n\>6$). 
It is also convenient to consider the partial identities $e_i=\id_{\Omega_n\setminus\{i\}}$ for $1\<i\<n$. 
Clearly, $e_1,e_2,\ldots,e_n\in\DPW_n^-$ and $e=e_n$. 
 
\smallskip 
 
Recall that $\DI_n\subseteq\DPW_n^-$ and also that it was proved in \cite{Fernandes&Paulista:2022arxiv} that $\DI_n=\langle g,h,e\rangle$ and, 
moreover, that $\rank(\DI_n)=3$. 
Notice that, we also have $e_1,\ldots,e_n\in\DI_n$ and $e_i=g^{n-i}eg^i$ for $1\<i\<n$. 

\smallskip 

In this section, our first objective is to show that the set $\{g,h,e, c_j\mid 1\<j\<\lfloor\frac{n}{2}\rfloor-2\}$ generates the monoid $\DPW_n^-$. 

If $\alpha\in\DPW_n^-$ is an element of rank $n$, i.e. $\alpha\in J_n^-$, then $\alpha\in\mathscr{D}_{2n}=\langle g,h\rangle$, by Corollary \ref{gpu}. 

Let $\alpha\in\DPW_n^-$ be an element of rank $n-1$. Then, $\dom(\alpha)=A_{i,j}$ and so $\alpha=\alpha|_{A_{i,j}}$,  
for some  $1\<i,j\<n$ (more specifically, $(i,j)\in\{(1,n-1),(2,n)\}$ or $j=i-2$ with $3\<i\<n$). Hence, by Proposition \ref{char}, $\alpha\in\DI_n$.

Combining the two facts just observed, we have the following lemma. 

\begin{lemma}\label{n-1}
Let $\alpha\in\DPW_n^-$ be an element of rank greater than or equal to $n-1$. Then $\alpha\in\langle g,h,e\rangle=\DI_n$. 
\end{lemma}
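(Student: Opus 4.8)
The plan is to argue by cases on $\rank(\alpha)$: since we regard $\DPW_n^-$ as an inverse submonoid of $\I_n$, an element of rank at least $n-1$ has rank exactly $n$ or exactly $n-1$, so only two cases occur. In the first case $\alpha$ lies in the group of units $J_n^-$ of $\DPW_n^-$, and Corollary \ref{gpu} identifies $J_n^-$ with the dihedral group $\D_{2n}=\langle g,h\rangle$; hence $\alpha\in\langle g,h\rangle\subseteq\langle g,h,e\rangle$ and nothing more is needed here.

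The substantive case is $\rank(\alpha)=n-1$. Here I would first observe that $\dom(\alpha)$ is a subset of $\Omega_n$ missing exactly one point, say $\dom(\alpha)=\Omega_n\setminus\{i\}$ with $1\<i\<n$; deleting a single vertex from the cycle underlying $\Omega_n$ leaves an interval in the cyclic order, so $\dom(\alpha)=A_{i+1,i-1}$ is a (maximal) arc of $\dom(\alpha)$, indices being read modulo $n$ as throughout the paper. Consequently $\alpha=\alpha|_{A_{i+1,i-1}}$, and since $\alpha\in\DPW_n^-$ it satisfies condition 2 (equivalently 2') of Proposition \ref{char}, which gives $\alpha|_{A_{i+1,i-1}}\in\DI_n$, i.e. $\alpha\in\DI_n$. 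It then remains only to invoke the result of \cite{Fernandes&Paulista:2022arxiv} recalled above, namely $\DI_n=\langle g,h,e\rangle$, to conclude $\alpha\in\langle g,h,e\rangle$.

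The single point requiring any care is the combinatorial remark that every $(n-1)$-element subset of $\Omega_n$ is an arc; once that is in place, Proposition \ref{char} does all the work, so the argument is essentially a bookkeeping combination of Corollary \ref{gpu}, Proposition \ref{char} and the known generating set of $\DI_n$, with no real obstacle to overcome.
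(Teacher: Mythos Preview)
Your proposal is correct and follows essentially the same approach as the paper: split into the cases $\rank(\alpha)=n$ and $\rank(\alpha)=n-1$, use Corollary~\ref{gpu} for the first, and for the second observe that $\dom(\alpha)$ is a single arc so that Proposition~\ref{char} yields $\alpha\in\DI_n=\langle g,h,e\rangle$. The only cosmetic difference is that the paper lists the possible arcs explicitly as $(i,j)\in\{(1,n-1),(2,n)\}$ or $j=i-2$ with $3\<i\<n$, whereas you write the arc uniformly as $A_{i+1,i-1}$.
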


Given a subset $X$ of $\Omega_n$, an element of $\Omega_n\setminus X$ is called a \textit{gap} of $X$.

\begin{lemma}\label{n-2}
Let $\alpha\in\DPW_n^-$ be an element of rank equal to $n-2$. Then $\alpha\in\langle g,h,e,c_j\,(1\<j\<\lfloor\frac{n}{2}\rfloor-2)\rangle$. 
\end{lemma}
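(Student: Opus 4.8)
The plan is to split according to the number of maximal arcs of $\dom(\alpha)$, and in the harder case to bring $\alpha$ into one of a very short list of normal forms by multiplying on the left and on the right by elements of the dihedral group $\D_{2n}=\langle g,h\rangle\subseteq\DPW_n^-$. If the two gaps of $\dom(\alpha)$ are adjacent vertices of $C_n$, then $\dom(\alpha)=A_{i,j}$ is a single arc, so $\alpha=\alpha|_{A_{i,j}}\in\DI_n=\langle g,h,e\rangle$ by Proposition~\ref{char} (exactly as in Lemma~\ref{n-1}). Hence I may assume that $\dom(\alpha)$ has two maximal arcs.

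In that case, since $|\Omega_n\setminus\dom(\alpha)|=2$, the two gaps are non-adjacent single vertices, one on each side of the two arcs; write $p\<q$ for the arc sizes, so $p+q=n-2$ and $p\<\lfloor\frac n2\rfloor-1$. By Proposition~\ref{char}, $\im(\alpha)$ likewise has two maximal arcs, of the same sizes $p$ and $q$, again separated by single gaps. Since any two such cyclic configurations (an arc of size $p$, a single gap, an arc of size $q$, a single gap) are carried onto one another by an element of $\D_{2n}$, I can choose $u,v\in\D_{2n}$ so that $\alpha':=u\alpha v$ has $\dom(\alpha')=\im(\alpha')=D:=\Omega_n\setminus\{p+1,n\}=\{1,\ldots,p\}\cup\{p+2,\ldots,n-1\}$; using in addition the rotation $g^{p+1}$ on one side when $p=q$ (it stabilizes $D$ and interchanges its two arcs), I may further require that $\alpha'$ maps $A_1:=\{1,\ldots,p\}$ onto $A_1$ and $A_2:=\{p+2,\ldots,n-1\}$ onto $A_2$. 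As $\alpha=u^{-1}\alpha'v^{-1}$ with $u^{-1},v^{-1}\in\langle g,h\rangle$, it suffices to prove $\alpha'\in\langle g,h,e,c_j\mid 1\<j\<\lfloor\frac n2\rfloor-2\rangle$.

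By~(\ref{arcs}), $\alpha'$ either preserves or reverses the orientation of each of $A_1$ and $A_2$, and is completely determined by this pair of choices. Assume first $p\>2$, giving four cases: if $\alpha'$ preserves both, then $\alpha'=\id_D=e_{p+1}e_n\in\DI_n$; if $\alpha'$ reverses both, then $\alpha'$ is the restriction to $D$ of the reflection $hg^p$ (which stabilizes $D$ and reverses both arcs), so $\alpha'\in\DI_n$; if $\alpha'$ preserves $A_1$ and reverses $A_2$, then $\alpha'=c_{p-1}$, with $1\<p-1\<\lfloor\frac n2\rfloor-2$; and if $\alpha'$ reverses $A_1$ and preserves $A_2$, then a direct check gives $\alpha'=hg^p\,c_{p-1}\in\langle g,h,c_j\rangle$. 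If instead $p=1$, the orientation on the singleton $A_1$ is immaterial, leaving only $\alpha'=\id_D$ and $\alpha'=hg|_D$, both in $\DI_n$. In every case $\alpha'\in\langle g,h,e,c_j\mid 1\<j\<\lfloor\frac n2\rfloor-2\rangle$, which proves the lemma.

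The main obstacle is the last of the four cases: every product $u\,c_j\,v$ with $u,v\in\D_{2n}$ necessarily acts with \emph{opposite} orientations on its two maximal arcs, so $c_{p-1}$ alone cannot realize the map that reverses the size-$p$ arc while preserving the size-$q$ arc; precomposing $c_{p-1}$ with the reflection $hg^p$ (an element of $\D_{2n}$ stabilizing $D$ and reversing \emph{both} arcs) repairs this. The remaining care is routine: checking that $\D_{2n}$ acts transitively enough on the relevant cyclic configurations (and invoking $g^{p+1}$ when $p=q$) to perform the reduction; verifying that $\id_D$, $hg^p|_D$ and $hg|_D$ really lie in $\DI_n$, via Proposition~\ref{char} together with $e_i=g^{n-i}eg^i$; and confirming that the index $p-1$ of the generator $c_{p-1}$ used always lies in the admissible range $1\<p-1\<\lfloor\frac n2\rfloor-2$.
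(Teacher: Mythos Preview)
Your proof is correct and follows essentially the same route as the paper's: both split according to whether the two gaps are adjacent, reduce the two-arc case via dihedral conjugation to a fixed normal form with domain and image $\{1,\ldots,p\}\cup\{p+2,\ldots,n-1\}$, and then enumerate the (at most four) possibilities according to the orientation on each arc. The only cosmetic differences are that the paper writes down explicit rotations $g^{r-1},g^{n-s+1}$ (which automatically align $A_1$ with $B_1$, so no separate $p=q$ adjustment is needed), and in the ``reverse $A_1$, preserve $A_2$'' case factors as $c_j\cdot e_{j+2}e_n\cdot hg^{j+1}$ rather than your $hg^p\cdot c_{p-1}$; both factorizations are valid.
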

\begin{proof}
Since $\dom(\alpha)$ has exactly two gaps, then $\dom(\alpha)$ has only one maximal arc, if its gaps are \textit{consecutive} 
(i.e. of the form $i$ and $i+1$ for some $1\<i\<n$), and $\dom(\alpha)$ has two (distinct) maximal arcs, otherwise.  

If the gaps of $\dom(\alpha)$ are consecutive, then $\alpha$ is of the form $\left(\begin{smallmatrix}
A_1 \\
B_1 
\end{smallmatrix}\right)$, whence $\alpha=\alpha|_{A_1}$ and so, by Proposition \ref{char}, $\alpha\in\DI_n=\langle g,h,e\rangle$. 

So, suppose that $\dom(\alpha)$ has non consecutive gaps. 
Then $\alpha$ is of the form $\left(\begin{smallmatrix}
A_1 & A_2 \\
B_1 & B_2
\end{smallmatrix}\right)$.
Suppose, without loss of generality, that $|A_1|\<|A_2|$. 
Let $j=|A_1|-1$ and let $1\<r,s\<n$ be such that $A_1=\{r,r+1,\ldots,r+j\}$ and $B_1=\{s,s+1,\ldots,s+j\}$.
Let $\beta=g^{r-1}\alpha g^{n-s+1}$. 
Then  $\alpha=g^{n-r+1}\beta g^{s-1}$ and 
$$
\beta = \begin{pmatrix}
\{1,\ldots,j+1\}&\{j+3,\ldots,n-1\} \\
\{1,\ldots,j+1\}&\{j+3,\ldots,n-1\} 
\end{pmatrix}. 
$$

If $|A_1|=1$ then $A_1=\{r\}$, $B_1=\{s\}$, $j=0$ and 
$
\beta = \left(\begin{smallmatrix}
1&\{3,\ldots,n-1\} \\
1&\{3,\ldots,n-1\} 
\end{smallmatrix}\right).
$
So, 
$
\beta = \left(\begin{smallmatrix}
1&3 & \cdots & n-1 \\
1&3 & \cdots & n-1
\end{smallmatrix}\right)
=e_2e_n=g^{n-2}eg^2e
$ 
or
$
\beta = \left(\begin{smallmatrix}
1&3 & \cdots & n-1\\
1&n-1 & \cdots & 3
\end{smallmatrix}\right) 
=e_2e_nhg=g^{n-2}eg^2ehg
$.
Hence, it follows that $\alpha\in \langle g,h,e\rangle$. 

On the other hand, suppose that $|A_1|>1$. 
Since $|A_1|+|A_2|=n-2$ and $|A_1|\<|A_2|$, we must have $|A_1|\<\lfloor\frac{n}{2}\rfloor-1$. 
Then,  $1\<j=|A_1|-1\<\lfloor\frac{n}{2}\rfloor-2$ and so 
$
\beta = \left(\begin{smallmatrix}
1& \cdots & j+1& j+3 & \cdots & n-1 \\
1& \cdots & j+1& j+3 & \cdots & n-1 
\end{smallmatrix}\right)
=e_{j+2}e_n=g^{n-j-2}eg^{j+2}e
$ 
or 
$
\beta = \left(\begin{smallmatrix}
1& \cdots & j+1& j+3 & \cdots & n-1 \\
1& \cdots & j+1& n-1 & \cdots & j+3 
\end{smallmatrix}\right)
=c_j
$ 
or 
$
\beta = \left(\begin{smallmatrix}
1& \cdots & j+1& j+3 & \cdots & n-1 \\
j+1& \cdots & 1& j+3 & \cdots & n-1 
\end{smallmatrix}\right)
=c_je_{j+2}e_nhg^{j+1}=c_jg^{n-j-2}eg^{j+2}ehg^{j+1}
$ 
or 
$
\beta = \left(\begin{smallmatrix}
1& \cdots & j+1& j+3 & \cdots & n-1 \\
j+1& \cdots & 1& n-1 & \cdots & j+3 
\end{smallmatrix}\right)
=e_{j+2}e_nhg^{j+1}=g^{n-j-2}eg^{j+2}ehg^{j+1}. 
$ 
Hence,  $\alpha\in\langle g,h,e,c_j\,(1\<j\<\lfloor\frac{n}{2}\rfloor-2)\rangle$, as required. 
\end{proof}

\begin{lemma}\label{qid}
Let $\gamma=\left(\begin{smallmatrix}
A_1 & A_2 & \cdots & A_\ell \\
A_1 & A_2 & \cdots & A_\ell
\end{smallmatrix}\right)\in\DPW_n^-$  
be such that  $\gamma|_{\dom(\gamma)\setminus A_j}=\id_{\dom(\gamma)\setminus A_j}$ 
for some $1\<j\<\ell$. 
If $\rank(\gamma)=k\<n-3$ then $\gamma$ is a product of elements of $\DPW_n^-$ with ranks (strictly) greater than $k$.
\end{lemma}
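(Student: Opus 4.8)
\medskip

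The plan is to exhibit $\gamma$ as a product $\gamma=\mu\nu$ of two elements of $\DPW_n^-$, each of rank $k+1$: the factor $\mu$ will be $\gamma$ together with one extra fixed point, and the factor $\nu$ will be an enlarged partial identity whose only purpose is to delete that extra point again.

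First I would extract the combinatorial content of the hypothesis. Since $\rank(\gamma)=k\leqslant n-3$, the set $\dom(\gamma)$ has at least $n-k\geqslant 3$ gaps in $\Omega_n$. Write $A_j=A_{a,b}$ for suitable $1\leqslant a,b\leqslant n$. As $A_j$ is a maximal arc of $\dom(\gamma)$, both $a-1$ and $b+1$ are gaps of $\dom(\gamma)$, and since $|A_j|\leqslant k\leqslant n-3$ the arc $A_j$ omits at least three points of $\Omega_n$, so in particular $a-1\neq b+1$. Thus $\{a-1,b+1\}$ is a set of exactly two gaps of $\dom(\gamma)$; as there are at least three gaps, I may fix a gap $p$ of $\dom(\gamma)$ with $p\notin\{a-1,b+1\}$, and then a further gap $q\neq p$.

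Next I would set $\dom(\mu)=\dom(\gamma)\cup\{p\}$, $\mu|_{\dom(\gamma)}=\gamma$ and $p\mu=p$. Then $\mu$ is injective with image $\dom(\gamma)\cup\{p\}$, we have $0\notin\dom(\mu)\cup\im(\mu)$, and $\rank(\mu)=k+1$, so the only thing left to verify is that $\mu$ preserves $\d$; this is the one genuinely non-routine point, and it is where the small diameter of $W_n$ is used. For a pair of points of $\dom(\gamma)$ there is nothing to prove, since $\mu$ agrees there with $\gamma\in\DPW_n^-$. For a pair $\{p,x\}$ with $x\in\dom(\gamma)\setminus A_j$ we have $x\mu=x$ and $p\mu=p$, so distances are preserved trivially. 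Finally, for a pair $\{p,x\}$ with $x\in A_j$: since $p\notin A_j$ and $p\notin\{a-1,b+1\}$, neither $p-1$ nor $p+1$ lies in $A_j$, so $p$ is adjacent in $W_n$ to no vertex of $A_j$; as $W_n$ has diameter $2$ and both $x$ and $x\mu$ lie in $A_j$, it follows that $\d(p,x)=2=\d(p,x\mu)$. Hence $\mu\in\DPW_n^-$.

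Finally I would take $\nu=\id_{\dom(\gamma)\cup\{q\}}$; being a partial identity it belongs to $\DPW_n^-$, and $\rank(\nu)=k+1$. Since $p$ is a gap of $\dom(\gamma)$ and $p\neq q$, we have $p\mu=p\notin\dom(\gamma)\cup\{q\}=\dom(\nu)$, whereas $x\mu\in\dom(\gamma)\subseteq\dom(\nu)$ for every $x\in\dom(\gamma)$; therefore $\dom(\mu\nu)=\dom(\gamma)$ and $\mu\nu=\mu|_{\dom(\gamma)}=\gamma$. Thus $\gamma=\mu\nu$ with $\rank(\mu)=\rank(\nu)=k+1>k$, which is exactly what is needed. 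The hypothesis $k\leqslant n-3$ enters only to guarantee the existence of a gap $p$ that is not adjacent to the arc $A_j$, i.e.\ not equal to $a-1$ or $b+1$; the verification that $\mu\in\DPW_n^-$ is the main obstacle, everything else being bookkeeping.
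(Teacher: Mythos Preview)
Your proof is correct, and it follows a genuinely different route from the paper's.

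The paper argues by cases on the number $\ell$ of maximal arcs. For $\ell=1$ it observes that $\gamma\in\DI_n$ and is therefore a product of elements of rank $\geqslant n-1$. For $\ell=2$ it extends $\gamma$ by one fixed point $t$ chosen adjacent to the arc $A_i$ \emph{other} than $A_j$ (so the extension is still the identity off $A_j$ and membership in $\DPW_n^-$ follows from the arc characterization, Proposition~\ref{char}), and then writes $\gamma=\bar\gamma\, e_t$. For $\ell\geqslant3$ it fills in the entire gap between two arcs neither of which is $A_j$ and restricts back via a product of the $e_t$.

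Your argument is case-free: you pick any gap $p$ that is not one of the two cycle-neighbours $a-1,b+1$ of $A_j$ (which exists since there are at least three gaps), fix it, and then kill it with a partial identity of rank $k+1$. The point that makes this work---and that the paper's argument does not use---is that $W_n$ has diameter $2$, so once $p$ is non-adjacent to $A_j$ you get $\d(p,x)=2=\d(p,x\mu)$ for every $x\in A_j$ without caring how $\gamma$ acts on $A_j$. This bypasses Proposition~\ref{char} entirely and yields a uniform two-factor decomposition $\gamma=\mu\nu$ with $\rank(\mu)=\rank(\nu)=k+1$, which is slightly sharper than what the paper produces. The paper's approach, on the other hand, stays within the arc-structure framework used throughout the article and makes the extensions $\bar\gamma$ have the same ``identity off one arc'' form as $\gamma$, which fits naturally with the surrounding arguments.
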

\begin{proof} 
Suppose that $\min(A_1)<\min(A_2)<\cdots<\min(A_\ell)$.

If $\ell=1$ then $\gamma=\gamma|_{A_1}$ and so, by Proposition \ref{char}, $\gamma\in\DI_n=\langle g,h,e\rangle$. 
Hence, $\gamma$ is a product of elements of $\DPW_n^-$ with rank greater than or equal to $n-1$ and $n-1>k$. 

Next, suppose that $\ell=2$. Let $i\in\{1,2\}\setminus\{j\}$. 
Then $A_i=\{r,r+1,\ldots,s\}$, with $r\<s$, or $A_i=\{r,\ldots,n,1,\ldots,s\}$, with $s<r+1$, for some $1\<r,s\<n$. 
Hence $r-1,s+1\not\in\dom(\gamma)$ and, 
since $\dom(\gamma)=A_1\cup A_2$ has $n-k\>3$ gaps, we have $r-2,r-1\not\in\dom(\gamma)$ or $s+1,s+2\not\in\dom(\gamma)$. 
If $r-2\not\in\dom(\gamma)$, take $t=r-1$; otherwise take $t=s+1$. 
Define $\bar\gamma$ to be the extension of $\gamma$ to $t$ defined by $t\bar\gamma=t$. 
Then, clearly, $\bar\gamma=\left(\begin{smallmatrix}
A_j & A_i\cup\{t\} \\
A_j & A_i\cup\{t\}
\end{smallmatrix}\right)\in\DPW_n^-$, 
$\rank(\bar\gamma)=k+1$ and $\gamma=\bar\gamma e_t$.  

Finally, suppose $\ell\>3$. Then, there exists $1\<i\<\ell$ such that $j\not\in\{i,i\oplus1\}$, where 
$$
i\oplus1=\left\{
\begin{array}{ll}
i+1 & \mbox{if $1\<i<\ell$}\\
1 & \mbox{if $i=\ell$}. 
\end{array}
\right. 
$$
Let $\bar A$ be the set of gaps of $\dom(\gamma)$ \textit{between} $A_i$ and $A_{i\oplus1}$, 
i.e. such that $A_i\cup\bar A\cup A_{i\oplus1}$ forms an arc of $\Omega_n$. 
Define $\bar\gamma$ to be the extension of $\gamma$ to $\bar A$ defined by $t\bar\gamma=t$ for $t\in\bar A$. 
Then, it is easy to check that we have $\bar\gamma=\left(\begin{smallmatrix}
A_1&\cdots&A_{(i\oplus1)-2}&A_i\cup\bar A\cup A_{i\oplus1} & A_{i+2} & \cdots &A_\ell\\
A_1&\cdots&A_{(i\oplus1)-2}&A_i\cup\bar A\cup A_{i\oplus1} & A_{i+2} & \cdots &A_\ell 
\end{smallmatrix}\right)\in\DPW_n^-$, 
$\rank(\bar\gamma)=k+|\bar A|>k$ and $\gamma=\bar\gamma\Pi_{t\in \bar A}e_t$, as required. 
\end{proof} 

\begin{lemma}\label{n-k}
Let $\alpha\in\DPW_n^-$ be such that $\rank(\alpha)=k\<n-3$. 
Then $\alpha$ is a product of elements of $\DPW_n^-$ with ranks (strictly) greater than $k$.
\end{lemma}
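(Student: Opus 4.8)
The plan is to reduce an arbitrary $\alpha\in\DPW_n^-$ of rank $k\leqslant n-3$ to the situation already handled by Lemma \ref{qid}, by factoring $\alpha$ through a ``partial identity on the same domain shape''. Write $\alpha=\left(\begin{smallmatrix}A_1&\cdots&A_\ell\\ B_1&\cdots&B_\ell\end{smallmatrix}\right)$, where $A_1,\dots,A_\ell$ are the maximal arcs of $\dom(\alpha)$ and $B_i=A_i\alpha$ are the maximal arcs of $\im(\alpha)$. The idea is to pick one distinguished maximal arc, say $A_j$, and split $\alpha$ as $\alpha=\gamma\delta$, where $\gamma$ is an idempotent-like element fixing every arc $A_i$ with $i\neq j$ pointwise and mapping $A_j$ to some convenient arc, and $\delta$ is an element of $\DPW_n^-$ that reassembles the pieces into $\alpha$. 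Concretely, I would first treat each arc $B_i$ separately: by Proposition \ref{char} and (\ref{arcs}), $\alpha|_{A_i}$ equals $g^{t_i}|_{A_i}$ or $hg^{t_i}|_{A_i}$ for suitable exponents, so each ``block'' of $\alpha$ is the restriction of a dihedral element. The goal is to write $\alpha$ as a product $\gamma_1\cdots\gamma_m$ in which each factor either lies in $\DI_n=\langle g,h,e\rangle$ (hence has rank $\geqslant n-1$) or is of the special form required by Lemma \ref{qid} (a ``partial identity'' whose restriction off one arc is the identity), and then invoke Lemmas \ref{n-1}, \ref{n-2} and \ref{qid}.

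First I would handle the case $\ell=1$: then $\dom(\alpha)=A_1$ is a single arc, so $\alpha=\alpha|_{A_1}\in\DI_n$ by Proposition \ref{char}, and $\alpha$ is a product of elements of rank $\geqslant n-1>k$. For $\ell\geqslant2$, the key move is: let $\eta=\left(\begin{smallmatrix}A_1&\cdots&A_\ell\\ A_1&\cdots&A_\ell\end{smallmatrix}\right)$ be the partial identity on $\dom(\alpha)$, so $\alpha=\eta\alpha$ and $\dom(\eta)=\dom(\alpha)$ with $\rank(\eta)=k$. I claim $\eta$ can itself be factored so that each factor fixes all but one arc pointwise, i.e. is of the Lemma \ref{qid} form; indeed $\eta=\prod_{i=1}^\ell \eta_i$ where $\eta_i$ agrees with $\eta$ on $A_i$ and is the identity on $\dom(\alpha)\setminus A_i$ — but wait, these $\eta_i$ all have the same (full) domain and are the identity, so this product is just $\eta$ again, which does not help directly. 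The genuine reduction is different: I factor $\alpha=\beta\mu$ where $\mu\in\DI_n$ ``straightens out'' the orientation of the last $\ell-1$ blocks and $\beta$ has one fewer ``non-trivial'' block, and iterate; alternatively, and more cleanly, observe that $\alpha=\left(\begin{smallmatrix}A_1&\cdots&A_\ell\\ A_1&\cdots&A_\ell\end{smallmatrix}\right)\cdot\nu$ for a suitable $\nu$ — here I would instead directly decompose $\alpha$ as a product of a single-block modification composed with a ``relabel everything by a dihedral element'' factor, reducing $\ell$ one at a time, until $\ell=1$.

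The cleanest route, which I expect to use, is this: by pre- and post-composing with elements of $\D_{2n}=\langle g,h\rangle$ (which have rank $n$, hence $>k$) we may normalize so that $A_j$ (for a chosen $j$) sits in a fixed standard position and $\alpha$ restricted to $A_j$ is itself a restriction of the identity $g^0$; then writing $\gamma$ for the extension of $\id_{\dom(\alpha)\setminus A_j}$ appropriately and appealing to Lemma \ref{qid} applied to the partial identity $\left(\begin{smallmatrix}A_1&\cdots&A_\ell\\ A_1&\cdots&A_\ell\end{smallmatrix}\right)$, we get that this partial identity is a product of higher-rank elements; finally $\alpha$ itself equals (partial identity on $\dom\alpha$) composed with an element of $\DI_n$ that carries each $A_i$ to $B_i$ with the correct orientation — but that last element need not have rank $>k$, so instead I factor the other way: $\alpha=\rho\cdot(\text{partial identity on }\im\alpha)$, with $\rho\in\langle g,h\rangle$-restriction... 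The honest statement is that $\alpha=\theta\circ\pi$ where $\pi$ is the partial identity $\left(\begin{smallmatrix}B_1&\cdots&B_\ell\\ B_1&\cdots&B_\ell\end{smallmatrix}\right)$ on $\im(\alpha)$ and $\theta=\left(\begin{smallmatrix}A_1&\cdots&A_\ell\\ B_1&\cdots&B_\ell\end{smallmatrix}\right)=\alpha$ itself — circular again. So the real content is: reduce $\alpha$ of rank $k$ and with $\ell\geqslant2$ maximal arcs to a product where one factor is a \emph{single-arc} map (rank $\geqslant n-1$ after filling in, by Lemma \ref{n-1}) times something of strictly larger rank. I would do this by ``merging'' two adjacent maximal arcs: choose, as in the proof of Lemma \ref{qid}, an index $i$ and let $\bar A$ be the gaps between $A_i$ and $A_{i\oplus1}$; extend $\alpha$ across $\bar A$ by the identity to get $\bar\alpha\in\DPW_n^-$ with $\rank(\bar\alpha)=k+|\bar A|>k$ and $\alpha=\bar\alpha\,\prod_{t\in\bar A}e_t$. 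But $\prod e_t$ has rank $n-|\bar A|\geqslant n-(n-k)=k$, not necessarily $>k$ — unless $|\bar A|<n-k$, which holds precisely when there is \emph{more than one} block of gaps, i.e. when $\ell\geqslant2$, since then the gaps are split among at least two ``inter-arc'' regions, so each such region has fewer than $n-k$ gaps. Hence $\rank\big(\prod_{t\in\bar A}e_t\big)=n-|\bar A|>k$, and both factors $\bar\alpha$ and $\prod_{t\in\bar A}e_t$ have rank strictly greater than $k$. When $\ell=1$, we already saw $\alpha\in\DI_n$. This completes the argument: for $\ell=1$ use Lemma \ref{n-1}; for $\ell\geqslant2$ use the merging trick above, where $\prod_{t\in\bar A}e_t=\prod_{t\in\bar A}g^{n-t}eg^{t}$ is a product of higher-rank elements and $\bar\alpha$ has strictly larger rank than $\alpha$.

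The main obstacle is precisely the bookkeeping that the two factors obtained really do have rank $>k$: one must check that when $\ell\geqslant2$ the $n-k$ gaps of $\dom(\alpha)$ are distributed among at least two maximal gap-intervals, so that the chosen gap-block $\bar A$ satisfies $1\leqslant|\bar A|\leqslant n-k-1$, forcing $\rank(\bar\alpha)=k+|\bar A|>k$ and $\rank\big(\prod_{t\in\bar A}e_t\big)=n-|\bar A|>k$; and one should note the degenerate possibilities (e.g. $k=n-3$ with a single gap-block would be the $\ell=1$ case and is handled separately, while $\ell\geqslant2$ with $k\leqslant n-3$ always leaves room). The rest — writing each $e_t$ as $g^{n-t}eg^{t}$, and checking $\alpha=\bar\alpha\prod_{t\in\bar A}e_t$ by comparing domains, images and values on each block via Proposition \ref{char} — is routine. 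Thus $\alpha$ is a product of elements of $\DPW_n^-$ of rank strictly greater than $k$, as claimed.
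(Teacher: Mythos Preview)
Your final ``merging'' argument has a genuine gap. You propose, for $\ell\geqslant2$, to choose a gap--block $\bar A$ between two adjacent maximal arcs $A_i$ and $A_{i\oplus1}$ of $\dom(\alpha)$, and to extend $\alpha$ across $\bar A$ \emph{by the identity} to obtain $\bar\alpha\in\DPW_n^-$. This is exactly the construction in the $\ell\geqslant3$ case of Lemma~\ref{qid}, but there it works only because of the special hypothesis on $\gamma$: in that lemma one chooses $i$ with $j\notin\{i,i\oplus1\}$, so $\gamma$ is the \emph{identity} on both $A_i$ and $A_{i\oplus1}$; extending by the identity on $\bar A$ then simply fuses two identity--blocks into one. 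For a general $\alpha$, the arcs $A_i$ and $A_{i\oplus1}$ are mapped to $B_i$ and $B_{i\oplus1}$, which need neither contain nor be adjacent to $\bar A$. Setting $t\bar\alpha=t$ for $t\in\bar A$ will in general destroy injectivity (if $\bar A\cap\im(\alpha)\neq\emptyset$), fail the distance condition (take for instance $n=8$ and $\alpha=\left(\begin{smallmatrix}1&2&4&5\\5&6&1&2\end{smallmatrix}\right)$: with $\bar A=\{3\}$ one gets $\d(2,3)=1$ but $\d(2\bar\alpha,3\bar\alpha)=\d(6,3)=2$), and violate the ``maximal arcs to maximal arcs'' requirement of Proposition~\ref{char}, since $A_i\cup\bar A\cup A_{i\oplus1}$ becomes a single arc in the domain while its image $B_i\cup\bar A\cup B_{i\oplus1}$ need not be an arc at all. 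Even the equation $\alpha=\bar\alpha\prod_{t\in\bar A}e_t$ fails when $\im(\alpha)\cap\bar A\neq\emptyset$, because the right--hand side then has smaller domain than $\alpha$.

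The paper's proof is substantially more delicate precisely because no such one--step extension is available for a general $\alpha$. It first conjugates by powers of $g$ to place the first arc in standard position $\{1,\dots,t\}$ in both domain and image, then composes with a Lemma~\ref{qid}--type element $\gamma_1$ (whose rank is $k$, but which Lemma~\ref{qid} factors into higher--rank pieces) to make $\alpha$ the identity on that first arc; it then uses a rank $n-2$ element $\lambda$ and another Lemma~\ref{qid}--type element $\gamma_2$ to normalize the second arc of the image, and a further element $\delta$ (itself factored into rank $>k$ pieces) to shift the second domain arc next to the first. Only after all this normalization is a single--point extension $\bar\beta$ to $t+1$ guaranteed to stay inside $\DPW_n^-$. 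In short, the reduction to Lemma~\ref{qid} is the whole content of the proof, not a one--line afterthought; your argument would need to supply a legitimate extension of $\alpha$ inside $\DPW_n^-$, and that is where the work lies.
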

\begin{proof}
Suppose that 
$\alpha=\left(\begin{smallmatrix}
A_1 & A_2 & \cdots & A_\ell \\
B_1 & B_2 & \cdots & B_\ell
\end{smallmatrix}\right)$, 
with $A_1=A_{r,r'}$ and $B_1=B_{s,s'}$, for some $1\<r,r',s,s'\<n$. Let $t=|A_1|=|B_1|$. 
Then 
$$
g^{r-1}\alpha g^{n-s+1}=
\begin{pmatrix}
A_1g^{n-r+1} & A_2g^{n-r+1} & \cdots & A_\ell g^{n-r+1} \\
B_1g^{n-s+1} & B_2g^{n-s+1} & \cdots & B_\ell g^{n-s+1}
\end{pmatrix}
=\begin{pmatrix}
\{1,\ldots,t\} & A'_2 & \cdots & A'_\ell \\
\{1,\ldots,t\} & B'_2 & \cdots & B'_\ell
\end{pmatrix},
$$ 
with $A'_i=A_ig^{n-r+1}$ and $B'_i=B_ig^{n-s+1}$ for $2\<i\<\ell$.  
Let 
$
\gamma_1=
\left(\begin{smallmatrix}
\{1,\ldots,t\} & B'_2 & \cdots & B'_\ell \\
\{1,\ldots,t\} & B'_2 & \cdots & B'_\ell
\end{smallmatrix}\right)\in\DPW_n^-
$ 
be such that $\gamma_1|_{B'_2 \cup \cdots \cup B'_\ell}=\id_{B'_2 \cup \cdots \cup B'_\ell}$ and $\gamma_1|_{\{1,\ldots,t\}}=g^{r-1}\alpha g^{n-s+1}|_{\{1,\ldots,t\}}$. 
Since $(\gamma_1|_{\{1,\ldots,t\}})^2=\id_{\{1,\ldots,t\}}$, we obtain 
$$
g^{r-1}\alpha g^{n-s+1}\gamma_1=
\begin{pmatrix}
1&\cdots&t & A'_2 & \cdots & A'_\ell \\
1&\cdots&t  & B'_2 & \cdots & B'_\ell
\end{pmatrix}. 
$$
Let $\beta=g^{r-1}\alpha g^{n-s+1}\gamma_1$. 
Then $\rank(\beta)=\rank(\gamma_1)=\rank(\alpha)=k$ and, as $\gamma_1^{-1}=\gamma_1$, 
we get $\alpha=g^{n-r+1}\beta\gamma_1 g^{s-1}$. 

Let $a=t+2$. Then $a\in A'_2$ or $a$ is a gap of $\dom(\beta)$. 
Let $b=\max(B'_2)$. Then $t+2=a\<x\<b$ for all $x\in B'_2$. 
Let
$$
\lambda=\left(\begin{array}{cccccc|cccc}
b+2 & \cdots & n & 1 & \cdots & t &  a & a+1 & \cdots & b \\ 
b+2 & \cdots & n & 1 & \cdots & t & b & b-1 & \cdots & a 
\end{array}\right). 
$$
Clearly, $\lambda\in\DPW_n^-$, $\rank(\lambda)=n-2$ (with $t+1$ and $b+1$ being the gaps of $\dom(\lambda)$ and $\im(\lambda)$) and $\lambda^{-1}=\lambda$. 
Then
$$
\beta\lambda=\begin{pmatrix}
1&\cdots&t & A'_2 & \cdots & A'_\ell \\
1&\cdots&t  & B'_2\lambda & \cdots & B'_\ell\lambda
\end{pmatrix}
$$
(notice that $|B'_i\lambda|=|B'_i|=|A'_i|$ for $2\<i\<\ell$, since the gaps of $\dom(\lambda)$ are also gaps of $\im(\beta)$, 
whence $A'_2,\ldots,A'_\ell$ and $B'_2\lambda, \ldots, B'_\ell\lambda$ are maximal arcs of $\dom(\beta\lambda)$ and $\im(\beta\lambda)$, respectively, 
and $\rank(\beta\lambda)=\rank(\beta)=k$). 

Let $p=|A'_2|=|B'_2|$ and $j=\min(A'_2)-t$. Then $j\>2$, $A'_2=\{t+j,\ldots,t+j+p-1\}$ and $B'_2=\{b-p+1,\ldots,b\}$. 
Hence, $B'_2\lambda=\{a=t+2,\ldots,a+p-1=t+p+1\}$ and so 
$
\beta\lambda=\left(\begin{smallmatrix}
1\:\cdots\:t & \{t+j,\ldots,t+j+p-1\} & A'_3 & \cdots & A'_\ell \\
1\:\cdots\:t & \{t+2,\ldots,t+p+1\} & B'_3\lambda & \cdots & B'_\ell\lambda
\end{smallmatrix}\right) 
$.

Now, let $\gamma_2\in\DPW_n^-$ be such that $\dom(\gamma_2)=\im(\beta\lambda)$, 
 $\gamma_2|_{\im(\beta\lambda)\setminus\{t+2,\ldots, t+p+1\}}=\id_{\im(\beta\lambda)\setminus\{t+2,\ldots, t+p+1\}}$ and
$
\gamma_2|_{\{t+2,\ldots, t+p+1\}} = ((\beta\lambda)|_{\{t+j,\ldots, t+j+p-1\}})^{-1}
\left(\begin{smallmatrix}
t+j & \cdots & t+j+p-1\\
t+2 & \cdots & t+p+1
\end{smallmatrix}\right)
$. 
Observe that, $\rank(\gamma_2)=\rank(\beta\lambda)=k$ and $\gamma_2^{-1}=\gamma_2$. 
Moreover, 
$$
\beta\lambda\gamma_2=
\begin{pmatrix}
1&\cdots&t & t+j&\cdots&t+j+p-1 & A'_3 & \cdots & A'_\ell \\
1&\cdots&t & t+2&\cdots&t+p+1 & B'_3\lambda & \cdots & B'_\ell\lambda
\end{pmatrix}
$$
and $\rank(\beta\lambda\gamma_2)=\rank(\beta\lambda)=k$. 

Next, let 
$$
\delta=\begin{pmatrix}
1 & \cdots & t & t+2 & \cdots & n-j+1\\
1 & \cdots & t & t+j & \cdots & n-1
\end{pmatrix}. 
$$
Clearly, $\delta\in\DPW_n^-$ and, 
since $t+1,\ldots,t+j-1,n$ are all the gaps of $\im(\delta)$ and they are also some of the gaps of $\dom(\beta\lambda\gamma_2)$, 
we get $\rank(\delta)=n-j\>\rank(\beta\lambda\gamma_2)=k$. Moreover, if $j=2$ then $\rank(\delta)=n-2>n-3\>k$, 
i.e. $\rank(\delta)=n-2>k$.  On the other hand, if $j\>3$ then 
$$
\delta= \begin{pmatrix}
1 & \cdots & t & t+2 & \cdots & n-j+2\\
1 & \cdots & t & t+j-1 & \cdots & n-1
\end{pmatrix}
\begin{pmatrix}
1 & \cdots & t+1 & t+j-1 & \cdots & n-2\\
1 & \cdots & t+1 & t+j & \cdots & n-1
\end{pmatrix}, 
$$
i.e. $\delta$ is a product of two transformations of $\DPW_n^-$ with ranks equal to $n-j+1$ and $n-j+1>k$.  
Furthermore, 
$$
\delta\beta\lambda\gamma_2=
\begin{pmatrix}
1&\cdots&t & t+2&\cdots&t+p+1 & A''_3 & \cdots & A''_\ell \\
1&\cdots&t & t+2&\cdots&t+p+1 & B'_3\lambda & \cdots & B'_\ell\lambda
\end{pmatrix}
$$
and $\rank(\delta\beta\lambda\gamma_2)=\rank(\beta\lambda)=k$, 
where $A''_i=A'_i-(j-2)=\{x-j+2\mid x\in A'_i\}$ for $3\<i\<\ell$. 

Now, let $\bar\beta$ be the extension of $\delta\beta\lambda\gamma_2$ to $t+1$ defined by $(t+1)\bar\beta=t+1$. 
Then, clearly, $\bar\beta\in\DPW_n^-$ and $\rank(\bar\beta)=\rank(\delta\beta\lambda\gamma_2)+1=k+1$. 
Moreover, $\delta\beta\lambda\gamma_2=e_{t+1}\bar\beta$ and so, 
taking into account the ranks of the transformations involved, we obtain 
$\beta=\delta^{-1}e_{t+1}\bar\beta\gamma_2^{-1}\lambda^{-1}=\delta^{-1}e_{t+1}\bar\beta\gamma_2\lambda$. 
Hence 
$$
\alpha=g^{n-r+1}\beta\gamma_1 g^{s-1}=g^{n-r+1}  \delta^{-1}e_{t+1}\bar\beta\gamma_2\lambda    \gamma_1 g^{s-1}. 
$$
Since $g,e_{t+1},\bar\beta$ and $\lambda$ have ranks (strictly) greater than $k$, $\delta$ (and so $\delta^{-1}$) is a product of one or two elements of 
$\DPW_n^-$ with ranks (strictly) greater than $k$ and, by Lemma \ref{qid}, $\gamma_1$ and $\gamma_2$ are products of elements of $\DPW_n^-$ with ranks (strictly) greater than $k$, it follows that $\alpha$ is itself a product of elements of $\DPW_n^-$ with ranks (strictly) greater than $k$, as required. 
\end{proof}

In view of Lemma \ref{n-1} and, combining Lemmas \ref{n-2} and \ref{n-k} with an inductive reasoning, it is easy to deduce the following result. 

\begin{proposition}\label{gen-}
$\DPW_n^-=\langle g,h,e,c_j\,(1\<j\<\lfloor\frac{n}{2}\rfloor-2)\rangle$. 
\end{proposition}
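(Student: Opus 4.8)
The plan is to establish the nontrivial inclusion $\DPW_n^-\subseteq S$, where $S=\langle g,h,e,c_j\,(1\<j\<\lfloor\frac{n}{2}\rfloor-2)\rangle$, by downward induction on the rank of an element. The reverse inclusion $S\subseteq\DPW_n^-$ is immediate: each listed generator belongs to $\DPW_n^-$, this set is closed under composition, and $\id_{\Omega_n}=g^n\in S$ is its identity, so $S$ is a submonoid of $\DPW_n^-$. All the combinatorial work has already been done in Lemmas \ref{n-1}, \ref{n-2} and \ref{n-k}, so what remains is just to assemble them.

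First I would set up the base of the induction. Since the domain and image of any element of $\DPW_n^-$ are contained in $\Omega_n$, the maximal possible rank is $n$. If $\alpha\in\DPW_n^-$ has $\rank(\alpha)\geqslant n-1$, then $\alpha\in\langle g,h,e\rangle=\DI_n\subseteq S$ by Lemma \ref{n-1}; if $\rank(\alpha)=n-2$, then $\alpha\in S$ by Lemma \ref{n-2}. Thus every element of $\DPW_n^-$ of rank greater than $n-3$ lies in $S$, which is the base case of the induction.

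For the inductive step, fix $k\<n-3$ and assume, as induction hypothesis, that every element of $\DPW_n^-$ of rank strictly greater than $k$ belongs to $S$. Let $\alpha\in\DPW_n^-$ with $\rank(\alpha)=k$. By Lemma \ref{n-k}, $\alpha$ can be written as a product $\alpha=\beta_1\beta_2\cdots\beta_m$ of elements $\beta_i\in\DPW_n^-$ each with $\rank(\beta_i)>k$. By the induction hypothesis each $\beta_i$ lies in $S$, hence so does $\alpha$. This completes the induction and shows $\DPW_n^-\subseteq S$, proving the proposition.

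Since the three preceding lemmas already supply the decompositions needed, there is no genuine obstacle left in this argument; the only point that requires a little care is organising the induction correctly — it runs on \emph{decreasing} rank (equivalently, on increasing $n-k$), with the top three ranks $n,n-1,n-2$ serving as base cases, and one must note that every factor produced by Lemma \ref{n-k} has strictly larger rank, so the process terminates at those base cases.
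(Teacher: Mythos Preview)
Your proof is correct and follows exactly the approach the paper itself takes: the paper merely states that the result follows from Lemmas~\ref{n-1}, \ref{n-2} and~\ref{n-k} ``with an inductive reasoning,'' and you have spelled out that downward induction on rank in detail. There is nothing to add.
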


The next result, which constitutes our second and last main objective of this section, allows us to conclude that the above set $\{g,h,e, c_j\mid 1\<j\<\lfloor\frac{n}{2}\rfloor-2\}$ of generators of $\DPW_n^-$ has minimum cardinality among the generating sets of $\DPW_n^-$. 

\begin{theorem}\label{r-}
$\rank(\DPW_n^-)=\lfloor\frac{n}{2}\rfloor+1$. 
\end{theorem}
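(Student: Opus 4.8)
The plan is to prove the two inequalities $\rank(\DPW_n^-)\<\lfloor\frac{n}{2}\rfloor+1$ and $\rank(\DPW_n^-)\>\lfloor\frac{n}{2}\rfloor+1$. The first is immediate: the generating set $\{g,h,e\}\cup\{c_j\mid 1\<j\<\lfloor\frac{n}{2}\rfloor-2\}$ provided by Proposition \ref{gen-} has exactly $3+(\lfloor\frac{n}{2}\rfloor-2)=\lfloor\frac{n}{2}\rfloor+1$ elements (and for $n\in\{4,5\}$ one has $\DPW_n^-=\DI_n=\langle g,h,e\rangle$, of rank $3=\lfloor\frac{n}{2}\rfloor+1$). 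So the real task is the lower bound: I would take an arbitrary generating set $X$ of $\DPW_n^-$ and exhibit inside $X$ two generators of rank $n$, one of rank $n-1$ and $\lfloor\frac{n}{2}\rfloor-2$ of rank $n-2$; since these have pairwise distinct ranks (and, within rank $n-2$, lie in distinct $\mathscr{J}$-classes) this yields $|X|\>\lfloor\frac{n}{2}\rfloor+1$ (for $n\in\{4,5\}$ the last group is empty and the count $2+1$ already matches).

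For rank $n$: since $\DPW_n^-$ is a finite monoid, any factorization of an element of the group of units $J_n^-$ uses only factors lying in $J_n^-$ (in a finite monoid, right-invertible elements are units), so $X\cap J_n^-$ generates $J_n^-=\D_{2n}$ (Corollary \ref{gpu}); as $\D_{2n}$ has rank $2$, we get $|X\cap J_n^-|\>2$. For rank $n-1$: writing $e=t_1\cdots t_m$ with $t_i\in X$, submultiplicativity of rank gives $\rank(t_i)\>n-1$ for all $i$, and since a product of units is a unit not all $t_i$ can have rank $n$; hence some $t_i\in X$ has rank exactly $n-1$.

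The substantive part is rank $n-2$: I would show that, for each $j$ with $1\<j\<\lfloor\frac{n}{2}\rfloor-2$, the set $X$ meets the $\mathscr{J}$-class $J_{(j+1,n-j-3)}^-$. The element $c_j$ has rank $n-2$, its domain and its image each consisting of exactly the two maximal arcs $\{1,\dots,j+1\}$ and $\{j+3,\dots,n-1\}$, so $c_j$ has $\mathscr{J}$-type $(j+1,n-j-3)$; moreover $c_j\notin\DI_n$, since no element of $\DI_n$ can at once preserve the orientation of $\{1,\dots,j+1\}$ and reverse that of $\{j+3,\dots,n-1\}$ (both arcs having at least two points). Now write $c_j=t_1\cdots t_m$ with $t_i\in X$ and let $i$ be least with $t_1\cdots t_i\notin\DI_n$; such $i$ exists because $\DI_n$ is a submonoid and $c_j\notin\DI_n$. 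Then $w:=t_1\cdots t_{i-1}\in\DI_n$ while $t_i\notin\DI_n$, so $\rank(t_i)\<n-2$ by Lemma \ref{n-1}; as $c_j=w\,t_i\,(t_{i+1}\cdots t_m)$ forces $\rank(t_i)\>\rank(c_j)=n-2$, we get $\rank(t_i)=n-2$, and the same identity gives $\rank(wt_i)=\rank(c_j)=\rank(t_i)$. Hence $\dom(c_j)=\dom(wt_i)$ and $w$ restricts to a bijection of $\dom(c_j)$ onto $\dom(t_i)$; writing $w=\sigma|_{\dom(w)}$ with $\sigma\in\D_{2n}$ this gives $\dom(t_i)=\dom(c_j)\sigma$, which, $\sigma$ being an isometry of $C_n$, has exactly two maximal arcs, of sizes $j+1$ and $n-j-3$. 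By Proposition \ref{char}, $t_i$ maps these onto the maximal arcs of $\im(t_i)$ and preserves their lengths, so $t_i$ has $\mathscr{J}$-type $(j+1,n-j-3)$, i.e. $t_i\in J_{(j+1,n-j-3)}^-$. As $j$ ranges over $1,\dots,\lfloor\frac{n}{2}\rfloor-2$ these $\mathscr{J}$-classes are pairwise distinct (Theorem \ref{J-}), so $X$ contains $\lfloor\frac{n}{2}\rfloor-2$ pairwise distinct elements of rank $n-2$; combining with the previous paragraph, $|X|\>2+1+(\lfloor\frac{n}{2}\rfloor-2)=\lfloor\frac{n}{2}\rfloor+1$, and together with the upper bound this proves the theorem.

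The hardest point will be the rank-$(n-2)$ step, precisely the need to control not merely the rank but the $\mathscr{J}$-type of the critical factor $t_i$ — this is what forces $\lfloor\frac{n}{2}\rfloor-2$ separate generators rather than one. The key observation making it work is that, because the left factors $t_1\cdots t_{i-1}$ lie in $\DI_n$, the domain of $t_i$ is the image of $\dom(c_j)$ under an isometry of the cycle $C_n$, hence inherits the same decomposition into maximal arcs; Proposition \ref{char} then transports this arc structure from $\dom(t_i)$ to $\im(t_i)$.
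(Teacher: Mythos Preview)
Your proof is correct and follows essentially the same strategy as the paper's: the upper bound from Proposition~\ref{gen-}, two units and one rank-$(n-1)$ generator forced by the $\DI_n$ layer, and for each $j$ a rank-$(n-2)$ generator of $\mathscr{J}$-type $(j+1,n-j-3)$ forced by factoring $c_j$. The only cosmetic difference is in the rank-$(n-2)$ step: the paper picks any factor $\alpha_i$ of rank $n-2$ and reads the arc sizes off $\im(\alpha_i)$ directly (using $\im(\alpha_1\cdots\alpha_i)=\im(\alpha_i)$ together with Proposition~\ref{char}), whereas you use minimality of $i$ to force the prefix into $\DI_n$ and then read the arc sizes off $\dom(t_i)$ via the dihedral action --- both routes are valid and yield the same conclusion.
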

\begin{proof}
By Lemma \ref{n-1}, we have $\{\alpha\in\DPW_n^-\mid\rank(\alpha)\>n-1\}\subseteq\DI_n$. 
Then, since $\DI_n$ has rank equal to three and, clearly, a generating set of $\DI_n$ must have at least two permutations of $\Omega_n$ 
and a transformation with rank $n-1$ (see \cite{Fernandes&Paulista:2022arxiv}), 
also a generating set of $\DPW_n^-$ must at least have two permutations of $\Omega_n$ and a transformation with rank $n-1$.  

Let $\mathcal{X}$ be a set of generators of $\DPW_n^-$.  

Let $1\<j\<\lfloor\frac{n}{2}\rfloor-2$. Then, $\rank(c_j)=n-2$ and $\dom(c_j)$ possesses two maximal arcs, 
one of them with $j+1$ elements and the other with $n-j-3$ elements. 
On the other hand, as $1c_j=1$, $2c_j=2$ and the only element $\sigma\in\D_{2n}$ such that $1\sigma=1$ and $2\sigma=2$ is the identity transformation, $c_j$ cannot be a restriction of some element of $\D_{2n}$ and so $c_j\not\in\DI_n$ (we can also show that 
$\d_{C_n}((j+1)c_j,(j+3)c_j)=\d_{C_n}(j+1,n-1)=j+2>2=\d_{C_n}(j+1,j+3)$, which also allows us to conclude that $c_j\not\in\DI_n$).  

Let $\alpha_1,\alpha_2,\ldots,\alpha_m\in\mathcal{X}$ ($m\>1$) be such that $c_j=\alpha_1\alpha_2\cdots\alpha_m$. 
As $\{\alpha\in\DPW_n^-\mid\rank(\alpha)\>n-1\}\subseteq\DI_n$, $c_j\not\in\DI_n$ and $\rank(c_j)=n-2$, 
there exists $1\<i\<m$ such that $\rank(\alpha_i)=n-2$. 
Now, we have $\dom(c_j)=\dom(\alpha_1\alpha_2\cdots\alpha_m)\subseteq\dom(\alpha_1\alpha_2\cdots\alpha_i)$ and, since 
$$
n-2=\rank(c_j)=\rank(\alpha_1\alpha_2\cdots\alpha_m)\<\rank(\alpha_1\alpha_2\cdots\alpha_i)\<\rank(\alpha_i)=n-2, 
$$
we get $\rank(\alpha_1\alpha_2\cdots\alpha_i)=n-2=\rank(c_j)$ and so we can conclude that $\dom(c_j)=\dom(\alpha_1\alpha_2\cdots\alpha_i)$. 
Hence, $|\{1,\ldots,j+1\}\alpha_1\alpha_2\cdots\alpha_i|=j+1$, 
$|\{j+3,\ldots,n-1\}\alpha_1\alpha_2\cdots\alpha_i|=n-j-3$ and 
$\{1,\ldots,j+1\}\alpha_1\alpha_2\cdots\alpha_i$ and  $\{j+3,\ldots,n-1\}\alpha_1\alpha_2\cdots\alpha_i$ 
are the maximal arcs of $\im(\alpha_1\alpha_2\cdots\alpha_i)$. 
On the other hand, $\im(\alpha_1\alpha_2\cdots\alpha_i)\subseteq\im(\alpha_i)$ and $\rank(\alpha_1\alpha_2\cdots\alpha_i)=n-2=\rank(\alpha_i)$
imply that  $\im(\alpha_1\alpha_2\cdots\alpha_i)=\im(\alpha_i)$.
Thus, $\{1,\ldots,j+1\}\alpha_1\alpha_2\cdots\alpha_i$ and  $\{j+3,\ldots,n-1\}\alpha_1\alpha_2\cdots\alpha_i$ 
are also the maximal arcs of $\im(\alpha_i)$. 

Therefore, we proved that, for each $1\<j\<\lfloor\frac{n}{2}\rfloor-2$, $\mathcal{X}$ possesses an element of rank $n-2$ with a maximal arc  
 with $j+1$ elements and another maximal arc with $n-j-3$ elements. 
 Since $j+1\<\lfloor\frac{n}{2}\rfloor-1\<\lfloor\frac{n+1}{2}\rfloor-1\<n-j-3$, 
 we may conclude that $\mathcal{X}$ possesses at least $\lfloor\frac{n}{2}\rfloor-2$ distinct elements of rank $n-2$ and so 
 $|\mathcal{X}|\>\lfloor\frac{n}{2}\rfloor+1$. 
 
 Finally, as the generating set $\{g,h,e, c_j\mid 1\<j\<\lfloor\frac{n}{2}\rfloor-2\}$ of $\DPW_n^-$ has exactly $\lfloor\frac{n}{2}\rfloor+1$ elements, 
 it follows that $\rank(\DPW_n^-)=\lfloor\frac{n}{2}\rfloor+1$, as required. 
\end{proof}

\section{Generators and rank of $\DPW_n$} \label{gr}

Let us consider the following transformations of $\DPW_n^+$: 
$$
g_0=\begin{pmatrix}
0&1&2&\cdots&n-1&n\\
0&2&3&\cdots&n&1
\end{pmatrix}
,\quad 
h_0=\begin{pmatrix}
0&1&2&\cdots&n-1&n\\
0&n&n-1&\cdots&2&1
\end{pmatrix}, 
$$
$$
e_0=\begin{pmatrix}
0&1&2&\cdots&n-1\\
0&1&2&\cdots&n-1
\end{pmatrix}
\quad\text{and}\quad
b_j=\left(\begin{array}{cccccccc}
0&1&2&\cdots&j+1&j+3&\cdots&n-1\\ 
0&1&2&\cdots&j+1&n-1&\cdots&j+3
\end{array}\right) \mbox{~for $1\<j\<\lfloor\frac{n}{2}\rfloor-2$.}
$$
Then, since $\Psi:\DPW_n^+\longrightarrow\DPW_n^-$ is an isomorphism, in view of Proposition \ref{gen-} and Theorem \ref{r-}, 
we immediately have the following corollary. 

\begin{corollary}\label{genr+}
$\DPW_n^+=\langle g_0,h_0,e_0,b_j\,(1\<j\<\lfloor\frac{n}{2}\rfloor-2)\rangle$ and $\rank(\DPW_n^+)=\lfloor\frac{n}{2}\rfloor+1$.
\end{corollary}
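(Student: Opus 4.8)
The plan is to exploit the isomorphism $\Psi\colon\DPW_n^+\longrightarrow\DPW_n^-$, $\alpha\Psi=\alpha|_{\Omega_n}$, introduced in Section \ref{subs}, together with the already established Proposition \ref{gen-} and Theorem \ref{r-}. The whole proof reduces to observing that the displayed generators of $\DPW_n^+$ are the $\Psi$-preimages of the generators of $\DPW_n^-$ from Proposition \ref{gen-}, after which the statement follows formally from the fact that an isomorphism carries generating sets to generating sets and preserves rank.

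First I would check the identities $g_0\Psi=g$, $h_0\Psi=h$, $e_0\Psi=e$ and $b_j\Psi=c_j$ for $1\<j\<\lfloor\frac{n}{2}\rfloor-2$. Each of these is a one-line verification: restricting each of $g_0,h_0,e_0,b_j$ to $\Omega_n$ simply deletes the fixed point $0$ (and, in the case of $e_0$ and $b_j$, shrinks the domain accordingly), producing exactly $g,h,e,c_j$. In particular $g_0,h_0,e_0,b_j\in\DPW_n^+$, since their images under the (inverse of the) isomorphism lie in $\DPW_n^+$ and they fix $0$.

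Next, since $\Psi$ is an isomorphism and $\DPW_n^-=\langle g,h,e,c_j\,(1\<j\<\lfloor\frac{n}{2}\rfloor-2)\rangle$ by Proposition \ref{gen-}, applying $\Psi^{-1}$ to this equality gives $\DPW_n^+=\langle g_0,h_0,e_0,b_j\,(1\<j\<\lfloor\frac{n}{2}\rfloor-2)\rangle$, which is the first assertion. For the rank, any monoid isomorphism preserves the minimum size of a generating set, so $\rank(\DPW_n^+)=\rank(\DPW_n^-)$, and the latter equals $\lfloor\frac{n}{2}\rfloor+1$ by Theorem \ref{r-}; alternatively, the exhibited generating set has exactly $\lfloor\frac{n}{2}\rfloor+1$ elements, so $\rank(\DPW_n^+)\leqslant\lfloor\frac{n}{2}\rfloor+1$, while the reverse inequality transports from $\DPW_n^-$ through $\Psi$.

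There is no real obstacle here: the content is entirely in Proposition \ref{gen-} and Theorem \ref{r-}, and the only thing to be careful about is the bookkeeping of the restriction map on $e_0$ and the $b_j$'s (getting the domains right so that $e_0\Psi=e$ and $b_j\Psi=c_j$ rather than some other restriction). Once that routine check is done, the corollary is immediate.
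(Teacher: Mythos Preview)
Your proposal is correct and matches the paper's own justification: the paper states the corollary as an immediate consequence of the isomorphism $\Psi:\DPW_n^+\longrightarrow\DPW_n^-$ together with Proposition \ref{gen-} and Theorem \ref{r-}, which is exactly the argument you spell out. The explicit verification that $g_0\Psi=g$, $h_0\Psi=h$, $e_0\Psi=e$ and $b_j\Psi=c_j$ is the only detail the paper leaves implicit, and your check of it is routine and correct.
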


Next, let us also consider the transformation  
$$
\iota=\begin{pmatrix}
1&2&\cdots&n\\
1&2&\cdots&n
\end{pmatrix} \in\DPW_n^-.
$$
Then, we have $g=g_0\iota$, $h=h_0\iota$, $e=e_0\iota$ and $c_j=b_j\iota$, for $1\<j\<\lfloor\frac{n}{2}\rfloor-2$, and it is easy to deduce the following result. 

\begin{corollary}\label{genr-+}
$\DPW_n^-\cup\DPW_n^+=\langle g_0,h_0,e_0,\iota,b_j\,(1\<j\<\lfloor\frac{n}{2}\rfloor-2)\rangle$ and $\rank(\DPW_n^-\cup\DPW_n^+)=\lfloor\frac{n}{2}\rfloor+2$.
\end{corollary}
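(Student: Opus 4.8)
The plan is to deduce both halves of the statement quickly from the results already established for $\DPW_n^+$ and for $\DPW_n^-$, together with the identities $g=g_0\iota$, $h=h_0\iota$, $e=e_0\iota$ and $c_j=b_j\iota$ $(1\<j\<\lfloor\frac{n}{2}\rfloor-2)$ recorded immediately above the statement.

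First I would handle the generating set, which also gives the upper bound $\rank(\DPW_n^-\cup\DPW_n^+)\<\lfloor\frac{n}{2}\rfloor+2$. Since $g_0,h_0,e_0,b_j\in\DPW_n^+$ and $\iota\in\DPW_n^-$, the proposed set is contained in the submonoid $\DPW_n^-\cup\DPW_n^+$ of $\DPW_n$, so $\langle g_0,h_0,e_0,\iota,b_j\rangle\subseteq\DPW_n^-\cup\DPW_n^+$. For the reverse inclusion, Corollary \ref{genr+} gives $\DPW_n^+=\langle g_0,h_0,e_0,b_j\rangle\subseteq\langle g_0,h_0,e_0,\iota,b_j\rangle$; and, using $g=g_0\iota$, $h=h_0\iota$, $e=e_0\iota$ and $c_j=b_j\iota$, the elements $g,h,e,c_j$ all lie in $\langle g_0,h_0,e_0,\iota,b_j\rangle$, whence Proposition \ref{gen-} yields $\DPW_n^-=\langle g,h,e,c_j\rangle\subseteq\langle g_0,h_0,e_0,\iota,b_j\rangle$. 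Therefore $\DPW_n^-\cup\DPW_n^+=\langle g_0,h_0,e_0,\iota,b_j\rangle$, a set of exactly $\lfloor\frac{n}{2}\rfloor+2$ elements.

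For the lower bound I would exploit the ideal structure: $\DPW_n^-$ and $\DPW_n^+$ are disjoint, $\DPW_n^+$ is a submonoid and $\DPW_n^-$ is an ideal of $\DPW_n^-\cup\DPW_n^+$. Let $\mathcal{Y}$ be any generating set. Writing an arbitrary (non-identity) element of $\DPW_n^+$ as a product of members of $\mathcal{Y}$, no factor can lie in the ideal $\DPW_n^-$ (otherwise the product would too), so every factor lies in $\mathcal{Y}\cap\DPW_n^+$; hence $\mathcal{Y}\cap\DPW_n^+$ already generates $\DPW_n^+$ as a monoid, and $|\mathcal{Y}\cap\DPW_n^+|\>\rank(\DPW_n^+)=\lfloor\frac{n}{2}\rfloor+1$ by Corollary \ref{genr+}. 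On the other hand, since $\DPW_n^+$ is a submonoid, $\langle\mathcal{Y}\cap\DPW_n^+\rangle\subseteq\DPW_n^+\subsetneq\DPW_n^-\cup\DPW_n^+$ (for instance $\iota\in\DPW_n^-\setminus\DPW_n^+$), so $\mathcal{Y}$ must contain at least one element of $\DPW_n^-$. As $\DPW_n^-$ and $\DPW_n^+$ are disjoint, $|\mathcal{Y}|\>\big(\lfloor\frac{n}{2}\rfloor+1\big)+1=\lfloor\frac{n}{2}\rfloor+2$, and together with the upper bound this gives $\rank(\DPW_n^-\cup\DPW_n^+)=\lfloor\frac{n}{2}\rfloor+2$.

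Since essentially all the difficult work is already packaged in Proposition \ref{gen-}, Theorem \ref{r-} and Corollary \ref{genr+}, I do not expect a genuine obstacle here. The only points needing a little care are in the lower bound: justifying cleanly that a factorization of an element outside the ideal $\DPW_n^-$ uses no generator from inside $\DPW_n^-$, and keeping track of the monoid (rather than semigroup) generating convention so that the identity of $\DPW_n$, which lies in $\DPW_n^+$, is correctly accounted for.
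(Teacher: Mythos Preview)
Your argument is correct and is exactly the natural fleshing-out of what the paper leaves implicit: the paper does not actually give a proof of this corollary but only says ``it is easy to deduce the following result'' from the identities $g=g_0\iota$, $h=h_0\iota$, $e=e_0\iota$, $c_j=b_j\iota$, and your use of Corollary~\ref{genr+}, Proposition~\ref{gen-} and the fact (stated in Section~\ref{subs}) that $\DPW_n^-$ is an ideal of $\DPW_n^-\cup\DPW_n^+$ is precisely the intended route. Your handling of the monoid identity and of the disjointness of $\DPW_n^-$ and $\DPW_n^+$ is fine.
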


Finally, consider the transformation 
$$
z=\begin{pmatrix}
0&1&2&n\\
1&0&2&n
\end{pmatrix}\in\DPW_n.  
$$
Observe that $z\not\in\DPW_n^-\cup\DPW_n^+$.

Now, we can prove the following proposition. 

\begin{proposition}\label{gen}
$\DPW_n=\langle g_0,h_0,e_0,\iota,z,b_j\,(1\<j\<\lfloor\frac{n}{2}\rfloor-2)\rangle$. 
\end{proposition}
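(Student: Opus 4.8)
The plan is to show that the proposed set generates all of $\DPW_n$ by reducing to the already-established generation of the three subsemigroups $\DPW_n^-$, $\DPW_n^+$ and $\DPW_n^-\cup\DPW_n^+$. Recall from Corollaries \ref{genr+} and \ref{genr-+} that $\langle g_0,h_0,e_0,\iota,b_j\,(1\<j\<\lfloor\frac{n}{2}\rfloor-2)\rangle=\DPW_n^-\cup\DPW_n^+$. Hence, writing $T=\langle g_0,h_0,e_0,\iota,z,b_j\,(1\<j\<\lfloor\frac{n}{2}\rfloor-2)\rangle$, we already have $\DPW_n^-\cup\DPW_n^+\subseteq T$, and it remains only to produce every $\alpha\in\DPW_n\setminus(\DPW_n^-\cup\DPW_n^+)$ as a product from $T$. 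By Lemma \ref{split} (Properties 1 and 2), every such $\alpha$ satisfies $\rank(\alpha)\<4$ and $0\in\dom(\alpha)\cup\im(\alpha)$ with $0$ not fixed (or not in both). So the entire task is the explicit construction of the finitely-many "$\mathscr{J}$-shapes" of low-rank elements meeting the vertex $0$ nontrivially.

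First I would organize the remaining elements by rank $k\in\{1,2,3,4\}$ and by the position of $0$. Since $\DPW_n$ is inverse, it suffices to treat one element of each $\mathscr{H}$-class, because if $\alpha\in T$ then $\alpha^{-1}\in T$ (every generator listed is its own inverse except $g_0,h_0$, whose inverses are powers/products of $g_0,h_0$, and $z$, which is an involution); and within a single $\mathscr{J}$-class one can move between $\mathscr{R}$- and $\mathscr{L}$-classes by pre- and post-multiplying by units and by the idempotents $e_0,\iota,b_j$ and their $\DI_n$-conjugates, all of which lie in $\DPW_n^-\cup\DPW_n^+\subseteq T$. Using the explicit description of distances in $W_n$ from the preliminaries, the key building block is $z=\left(\begin{smallmatrix}0&1&2&n\\1&0&2&n\end{smallmatrix}\right)$, which realizes the transposition of the hub $0$ with a spoke-vertex while fixing its two cycle-neighbours; conjugating $z$ by powers of $g$ (available in $T$ since $g=g_0\iota$) produces $\left(\begin{smallmatrix}0&i&i-1&i+1\\i&0&i-1&i+1\end{smallmatrix}\right)$ for each $i$, and restricting these gives all the rank-$\leq4$ partial isometries that send $0$ off the hub. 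The case analysis then checks that each admissible domain/image configuration with $0$ moved — there are only a bounded number of "types" by Lemma \ref{split}.1 (when $0\alpha\neq0$ the image of the rest sits inside $\{0,j_1-1,j_1+1\}$) — is obtained by composing such a conjugate of $z$ with a suitable idempotent from $T$ and, if needed, elements of $\DPW_n^-\cup\DPW_n^+$ to adjust domain and image.

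Concretely, I would argue: (i) rank $1$ partial maps $\left(\begin{smallmatrix}0\\i\end{smallmatrix}\right)$ and $\left(\begin{smallmatrix}i\\0\end{smallmatrix}\right)$ are restrictions of $z$ (or $g^k z g^{-k}$) to a single point, obtained by composing with partial identities $e_i$, which lie in $\DI_n\subseteq\DPW_n^-\subseteq T$; (ii) rank $2$ elements moving $0$ are restrictions of $g^k z g^{-k}$ to a $2$-element domain, again cut down by an idempotent of $T$; (iii) rank $3$ with $0\notin\dom\cap\im$ and rank $3$ with $0\in\dom\cap\im$ but $0\alpha\neq0$ are handled the same way, noting that $z$ itself already has rank $3$ restrictions of the right shape; (iv) rank $4$ with $0\in\dom\cap\im$ and $0\alpha\neq0$ — here, by the observation inside the proof of Lemma \ref{split}.1, the image of $\{i_2,i_3,i_4\}$ is exactly $\{0,j_1-1,j_1+1\}$, so $\alpha$ is forced to be, up to the $\D_{2n}$-action on both sides (available via $g_0,h_0,g,h\in T$) and up to reflection, one specific map such as $g^k z g^{-k}$ possibly composed with $h$ to reverse orientation; one writes it explicitly and checks membership in $T$.

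The main obstacle I expect is the rank-$4$ case: it is the tightest configuration, the one that actually needs the generator $z$ in an essential way (the lower ranks can be reached as restrictions of rank-$4$ elements once those are in hand), and the bookkeeping of which of the (few) orientation-reversed variants of $g^k z g^{-k}$ must be multiplied by $h$ or $h_0$, and on which side, is the fiddly part. Everything else is a routine but slightly lengthy verification that the listed idempotents together with the dihedral units suffice to slide a single "anchor" element $z$ around its whole $\mathscr{J}$-class and then to restrict it to every admissible domain. I would therefore present the proof as: first establish that a fixed supply of conjugates $g^k z g^{-k}$ (and their composites with $h$) lies in $T$; then, for each of the finitely many shape-types from Lemma \ref{split}, exhibit the short word in these elements and the generators of $\DPW_n^-\cup\DPW_n^+$ that produces a representative; finally invoke the inverse-semigroup symmetry and multiplication by idempotents of $T$ to fill out each $\mathscr{J}$-class, concluding $\DPW_n\subseteq T$ and hence equality.
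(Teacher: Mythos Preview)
Your overall strategy matches the paper's: reduce to $\DPW_n^-\cup\DPW_n^+$ via Corollary~\ref{genr-+}, then handle the remaining low-rank elements (rank $\leqslant 4$, meeting $0$ nontrivially) by a case analysis, expressing each as (pre-multiplier)$\cdot z\cdot$(post-multiplier) with the pre- and post-multipliers in $\DPW_n^-\cup\DPW_n^+$. The paper carries this out by writing, for each rank $k\in\{1,2,3,4\}$ and each position of $0$, an explicit factorisation $\alpha=\alpha_1 z\alpha_2$; your ``conjugate-then-restrict'' description amounts to the same factorisations organised slightly more conceptually.

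There is one concrete slip you should fix. You write that conjugating $z$ by powers of $g$ produces $\left(\begin{smallmatrix}0&i&i-1&i+1\\ i&0&i-1&i+1\end{smallmatrix}\right)$. This cannot be right: $g\in\DPW_n^-$ has $0\notin\dom(g)$, so $g^k z g^{-k}$ never contains $0$ in its domain or image. What you want is conjugation by powers of $g_0\in\DPW_n^+$, which fixes $0$; then $g_0^{-(i-1)} z\, g_0^{i-1}$ does give the displayed rank-$4$ element, and more generally $g_0^{-(i-1)} z\, g_0^{j-1}$ (optionally followed by $h_0$) yields all rank-$4$ shapes $\left(\begin{smallmatrix}0&i-1&i&i+1\\ j&j\pm1&0&j\mp1\end{smallmatrix}\right)$ identified in the paper's {\sc case}~1. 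With this correction your plan goes through, and the remaining ranks are, as you say, restrictions obtained by composing with partial identities in $\DPW_n^-\cup\DPW_n^+$; the paper simply writes out each of these products explicitly rather than invoking $\mathscr{J}$-class transitivity.
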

\begin{proof}
Let $\alpha\in\DPW_n$. If $\alpha\in\DPW_n^-\cup\DPW_n^+$ then, by Corollary \ref{genr-+}, 
$\alpha\in \langle g_0,h_0,e_0,\iota,z,b_j\,(1\<j\<\lfloor\frac{n}{2}\rfloor-2)\rangle$. 
Then, suppose that $\alpha\in\DPW_n\setminus(\DPW_n^-\cup\DPW_n^+)$. 
Hence, $0\in\dom(\alpha)\cup\im(\alpha)$ and, by Lemma \ref{split}, we have $\rank(\alpha)\<4$.  
In what follows, we prove that $\alpha\in \langle g_0,h_0,e_0,\iota,z,b_j\,(1\<j\<\lfloor\frac{n}{2}\rfloor-2)\rangle$, 
by considering all possible cases for $\alpha$. 

\smallskip 

{\sc case} 1. Suppose that $\rank(\alpha)=4$. Then, by Property 3 of Lemma \ref{split}, $0\in\dom(\alpha)\cap\im(\alpha)$ and $0\alpha\neq0$. 
Hence 
$
\alpha=\left(\begin{smallmatrix}
0&i&a&b\\
j&0&c&d
\end{smallmatrix}\right)
$
for some $i,j,a,b,c,d\in\Omega_n$. 
Since $\d(0,c)=\d(0,d)=\d(0,a)=\d(0,b)=1$, then $\d(i,a)=\d(i,b)=\d(j,c)=\d(j,d)=1$, 
whence $\{a,b\}=\{i-1,i+1\}$ and $\{c,d\}=\{j-1,j+1\}$. 
So,    
$
\alpha=\left(\begin{smallmatrix}
0&i-1&i&i+1\\
j&j\pm1&0&j\mp1
\end{smallmatrix}\right)
$ 
and we obtain 
$
\alpha=\alpha_1 z\alpha_2$, 
with 
$
\alpha_1=\left(\begin{smallmatrix}
0&i-1&i&i+1\\
0&n&1&2
\end{smallmatrix}\right), 
\alpha_2= 
\left(\begin{smallmatrix}
0&1&2&n\\
0&j& j\mp1&j\pm1
\end{smallmatrix}\right)
\in\DPW_n^+$.  
By Corollary \ref{genr+}, it follows that $\alpha\in \langle g_0,h_0,e_0,\iota,z,b_j\,(1\<j\<\lfloor\frac{n}{2}\rfloor-2)\rangle$. 

\smallskip 

{\sc case} 2. Suppose that $\rank(\alpha)=3$ and $0\in\dom(\alpha)\cap\im(\alpha)$. Then, as $0\alpha\neq0$, we have 
$
\alpha=\left(\begin{smallmatrix}
0&i&a\\
j&0&b 
\end{smallmatrix}\right)
$
for some $i,j,a,b\in\Omega_n$. 
Let 
$
\alpha_1=\left(\begin{smallmatrix}
0&i&a\\
0&1&2
\end{smallmatrix}\right)
$
and 
$
\alpha_2= 
\left(\begin{smallmatrix}
0&1&2\\
0&j&b
\end{smallmatrix}\right)
$. 
Then, as  $\d(i,a)=\d(0,b)=1=\d(1,2)$ and  $\d(j,b)=\d(0,a)=1=\d(1,2)$, 
we have $\alpha_1,\alpha_2\in\DPW_n^+$. 
Since $\alpha=\alpha_1 z\alpha_2$, by Corollary \ref{genr+}, 
we have $\alpha\in \langle g_0,h_0,e_0,\iota,z,b_j\,(1\<j\<\lfloor\frac{n}{2}\rfloor-2)\rangle$. 

\smallskip 

{\sc case} 3. Suppose that $\rank(\alpha)=3$ and $0\not\in\dom(\alpha)\cap\im(\alpha)$. 
Then, we must have $0\in\dom(\alpha)\setminus\im(\alpha)$ or $0\in\im(\alpha)\setminus\dom(\alpha)$. 

First, suppose that $0\in\dom(\alpha)\setminus\im(\alpha)$. Then,  
$
\alpha=\left(\begin{smallmatrix}
0&a&b\\
i&c&d 
\end{smallmatrix}\right)
$
for some $i,a,b,c,d\in\Omega_n$. 
Since $\d(0,a)=1=\d(0,b)$, then $\d(i,c)=1=\d(i,d)$, whence $\d(c,d)=2$ and so $\d(a,b)=2$. 
Then, 
$
\alpha_1=\left(\begin{smallmatrix}
0&a&b\\
0&2&n
\end{smallmatrix}\right)\in\DPW_n^+ 
$
and
$
\alpha_2= 
\left(\begin{smallmatrix}
1&2&n\\
i&c&d
\end{smallmatrix}\right)\in\DPW_n^-. 
$ 
Now, as $\alpha=\alpha_1 z\alpha_2$, by Corollary \ref{genr-+}, 
we have $\alpha\in \langle g_0,h_0,e_0,\iota,z,b_j\,(1\<j\<\lfloor\frac{n}{2}\rfloor-2)\rangle$. 

Secondly, suppose that $0\in\im(\alpha)\setminus\dom(\alpha)$. Then, $0\in\dom(\alpha^{-1})\setminus\im(\alpha^{-1})$ and we also have 
$\rank(\alpha^{-1})=3$ and $0\not\in\dom(\alpha^{-1})\cap\im(\alpha^{-1})$. 
Hence, by the first subcase, we get $\alpha^{-1}=\alpha_1 z\alpha_2$, for some $\alpha_1\in \DPW_n^+$ and  $\alpha_2\in\DPW_n^-$. 
As $z^{-1}=z$, we obtain $\alpha=\alpha_2^{-1} z\alpha_1^{-1}$. 
Since $\alpha_1^{-1}\in \DPW_n^+$ and  $\alpha_2^{-1}\in\DPW_n^-$, by Corollary \ref{genr-+}, 
we have $\alpha\in \langle g_0,h_0,e_0,\iota,z,b_j\,(1\<j\<\lfloor\frac{n}{2}\rfloor-2)\rangle$. 

\smallskip 

{\sc case} 4. Suppose that $\rank(\alpha)=2$.  Then $\alpha$ is of one of the following forms: 
\begin{description}
\item[--] $
\alpha=\left(\begin{smallmatrix}
0&i\\
j&0
\end{smallmatrix}\right)
$
for some $i,j\in\Omega_n$.  In  this subcase, we have 
$
\alpha=\left(\begin{smallmatrix}
0&i\\
0&1
\end{smallmatrix}\right)
z
\left(\begin{smallmatrix}
1&0\\
j&0
\end{smallmatrix}\right)
$
and 
$
\left(\begin{smallmatrix}
0&i\\
0&1
\end{smallmatrix}\right),
\left(\begin{smallmatrix}
1&0\\
j&0
\end{smallmatrix}\right)\in\DPW_n^+
$, 
whence $\alpha\in \langle g_0,h_0,e_0,\iota,z,b_j\,(1\<j\<\lfloor\frac{n}{2}\rfloor-2)\rangle$, by Corollary \ref{genr+};  

\item[--] $
\alpha=\left(\begin{smallmatrix}
0&i\\
a&b
\end{smallmatrix}\right)
$
for some $i,a,b\in\Omega_n$.  In  this subcase, we have 
$
\alpha=\left(\begin{smallmatrix}
0&i\\
0&2
\end{smallmatrix}\right)
z
\left(\begin{smallmatrix}
1&2\\
a&b
\end{smallmatrix}\right)
$, 
$
\left(\begin{smallmatrix}
0&i\\
0&2
\end{smallmatrix}\right)\in\DPW_n^+ 
$ 
and, 
as $\d(1,2)=1=\d(0,i)=\d(a,b)$, 
$
\left(\begin{smallmatrix}
1&2\\
a&b
\end{smallmatrix}\right)\in\DPW_n^-
$, 
whence $\alpha\in \langle g_0,h_0,e_0,\iota,z,b_j\,(1\<j\<\lfloor\frac{n}{2}\rfloor-2)\rangle$, by Corollary \ref{genr-+};  

\item[--] $
\alpha=\left(\begin{smallmatrix}
a&b\\
0&i
\end{smallmatrix}\right)
$
for some $i,a,b\in\Omega_n$.  
Then,  
$
\alpha^{-1}=\left(\begin{smallmatrix}
0&i\\
a&b
\end{smallmatrix}\right)
$
and, by the previous subcase, 
we have $\alpha^{-1}=
\left(\begin{smallmatrix}
0&i\\
0&2
\end{smallmatrix}\right)
z
\left(\begin{smallmatrix}
1&2\\
a&b
\end{smallmatrix}\right)
$,
with 
$
\left(\begin{smallmatrix}
1&2\\
a&b
\end{smallmatrix}\right)\in\DPW_n^-
$,
and so 
$\alpha=
\left(\begin{smallmatrix}
a&b\\
1&2
\end{smallmatrix}\right)
z
\left(\begin{smallmatrix}
0&2\\
0&i
\end{smallmatrix}\right)
$. 
Since 
$
\left(\begin{smallmatrix}
a&b\\
1&2
\end{smallmatrix}\right)\in\DPW_n^-
$
and
$
\left(\begin{smallmatrix}
0&2\\
0&i
\end{smallmatrix}\right)\in\DPW_n^+
$, 
by Corollary \ref{genr-+}, 
we have $\alpha\in \langle g_0,h_0,e_0,\iota,z,b_j\,(1\<j\<\lfloor\frac{n}{2}\rfloor-2)\rangle$. 

\end{description}

\smallskip 

{\sc case} 5. Finally, suppose that $\rank(\alpha)=1$.  Then, for some $i\in\Omega_n$, 
$
\alpha=\left(\begin{smallmatrix}
0\\
i
\end{smallmatrix}\right)=
z
\left(\begin{smallmatrix}
1\\
i
\end{smallmatrix}\right)
$
or 
$
\alpha=\left(\begin{smallmatrix}
i\\
0
\end{smallmatrix}\right)=
\left(\begin{smallmatrix}
i\\
1
\end{smallmatrix}\right)
z
$.
Since 
$
\left(\begin{smallmatrix}
1\\
i
\end{smallmatrix}\right),
\left(\begin{smallmatrix}
i\\
1
\end{smallmatrix}\right)\in\DPW_n^-
$, 
by Proposition \ref{gen-}, 
in both subcases, we get $\alpha\in \langle g_0,h_0,e_0,\iota,z,b_j\,(1\<j\<\lfloor\frac{n}{2}\rfloor-2)\rangle$, 
as required. 
\end{proof}

In particular, by Proposition \ref{gen}, we have $\DPW_4=\langle g_0,h_0,e_0,\iota,z\rangle$. Since $e_0=g_0^3z^2g_0$ for $n=4$, 
then $\DPW_4=\langle g_0,h_0,\iota,z\rangle$. On the other hand, any generating set of $\DPW_4$ must have at least: two permutations of $\Omega_n^0$ (in order to generate its group of units which is isomorphic to a dihedral group of order $2\times4$), one element of $\DPW_4^-$ and one element of 
$\DPW_4\setminus(\DPW_4^-\cup\DPW_4^+)$. Thus, $\{g_0,h_0,\iota,z\}$ is a generating set of minimal size of $\DPW_4^-$ 
and so $\rank(\DPW_4)=4$. 

Next, notice that $\rank(g_0)=\rank(h_0)=n+1$, $\rank(e_0)=n$, $\rank(\iota)=n$, $\rank(z)=4$ and $\rank(b_j)=n-1$, 
for $1\<j\<\lfloor\frac{n}{2}\rfloor-2$ (such a $j$ only exists for $n\>6$). So, for $n\geqslant5$, all the elements of the generating set 
$\{g_0,h_0,e_0,\iota,b_j\mid 1\<j\<\lfloor\frac{n}{2}\rfloor-2\}$ of $\DPW_n^-\cup\DPW_n^+$ have ranks greater than or equal to $5$ 
(notice that, for $n=5$ we have no $b_j$'s). 
On the other hand, as observed before, all elements of $\DPW_n\setminus(\DPW_n^-\cup\DPW_n^+)$ have ranks less than or equal to $4$. 
Thus, any generating set of $\DPW_n$ must contain a generating set of $\DPW_n^-\cup\DPW_n^+$ and so 
$\rank(\DPW_n)>\rank(\DPW_n^-\cup\DPW_n^+)=\lfloor\frac{n}{2}\rfloor+2$. 
This fact, in view of the Proposition \ref{gen}, demonstrates our main result that we state below and with which we end this paper.

\begin{theorem}\label{rank} 
$\rank(\DPW_4)=4$ and 
$\rank(\DPW_n)=\lfloor\frac{n}{2}\rfloor+3$, for $n\geqslant5$. 
\end{theorem}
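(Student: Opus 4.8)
The plan is to read the upper bound off Proposition~\ref{gen} and to obtain the matching lower bound by a rank-separation argument resting on Lemma~\ref{split} and on the value $\rank(\DPW_n^-\cup\DPW_n^+)=\lfloor\frac{n}{2}\rfloor+2$ recorded in Corollary~\ref{genr-+}. For $n\geqslant5$ the generating set supplied by Proposition~\ref{gen} is $\{g_0,h_0,e_0,\iota,z\}\cup\{b_j\mid 1\<j\<\lfloor\frac{n}{2}\rfloor-2\}$, which has exactly $5+(\lfloor\frac{n}{2}\rfloor-2)=\lfloor\frac{n}{2}\rfloor+3$ elements, so at once $\rank(\DPW_n)\<\lfloor\frac{n}{2}\rfloor+3$. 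For $n=4$ there are no $b_j$'s, and there I would first verify the identity $e_0=g_0^3z^2g_0$ (valid when $n=4$), which shows $\DPW_4=\langle g_0,h_0,\iota,z\rangle$ and hence $\rank(\DPW_4)\<4$.

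For the lower bound when $n\geqslant5$, write $M=\DPW_n^-\cup\DPW_n^+$ and let $X$ be an arbitrary generating set of $\DPW_n$; let $X_{\geqslant5}$ denote the set of elements of $X$ of rank at least $5$. I would run through the following steps. (1) By Lemma~\ref{split}, every element of $\DPW_n\setminus M$ has rank at most $4$, while every element of $\DPW_n$ of rank at least $5$ lies in $M$: if $0\notin\dom\cup\im$ it belongs to $\DPW_n^-$, and otherwise Lemma~\ref{split}(4) forces $0\in\dom\cap\im$ and $0\alpha=0$, that is, membership in $\DPW_n^+$. (2) For partial permutations one has $\rank(\alpha\beta)\<\min\{\rank(\alpha),\rank(\beta)\}$, so any element of rank at least $5$ written as a product of elements of $X$ uses only factors of rank at least $5$. (3) The generating set $\{g_0,h_0,e_0,\iota\}\cup\{b_j\}$ of $M$ from Corollary~\ref{genr-+} consists entirely of elements of rank at least $5$, since $\rank(g_0)=\rank(h_0)=n+1$, $\rank(e_0)=\rank(\iota)=n\geqslant5$, and $\rank(b_j)=n-1\geqslant5$ (such a $b_j$ existing only when $n\geqslant6$). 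Combining (2) and (3), each generator of $M$ from that list is a product of elements of $X_{\geqslant5}$, whence $M\subseteq\langle X_{\geqslant5}\rangle$; since $X_{\geqslant5}\subseteq M$ by (1), $X_{\geqslant5}$ is a generating set of $M$, so $|X_{\geqslant5}|\geqslant\rank(M)=\lfloor\frac{n}{2}\rfloor+2$. (4) Finally, $M$ is a proper submonoid of $\DPW_n$, so $X$ must contain an element not in $M$; by (1) such an element has rank at most $4$ and so does not lie in $X_{\geqslant5}$. Hence $|X|\geqslant|X_{\geqslant5}|+1\geqslant\lfloor\frac{n}{2}\rfloor+3$, which together with the upper bound gives $\rank(\DPW_n)=\lfloor\frac{n}{2}\rfloor+3$.

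The case $n=4$ must be handled separately, and this is where I expect the real difficulty to lie, since the rank-separation argument collapses there: the generating set of $M=\DPW_4^-\cup\DPW_4^+$ from Corollary~\ref{genr-+} contains the rank-$4$ elements $e_0$ and $\iota$, and $z$ itself has rank $4$ with $z^2\in M$, so an element of $X$ outside $M$ can genuinely take over the role of a generator of $M$ (which is exactly why the answer drops to $4$ here rather than $5$). Having secured $\rank(\DPW_4)\<4$ above, for the reverse inequality I would argue directly that a generating set of $\DPW_4$ must contain at least two permutations of $\Omega_4^0$ (the group of units $J_5\simeq\D_{8}$ has rank $2$, and an element of rank $5$ is a product only of elements of rank $5$), at least one element of $\DPW_4\setminus M$ (since $M$ is a proper submonoid), and, because the permutations in a generating set all fix $0$ by Lemma~\ref{split}(4) and hence lie in $\DPW_4^+$, at least one further element of $M$ lying outside the group of units; checking that these four obligations cannot be met by fewer than four pairwise distinct elements then gives $\rank(\DPW_4)\geqslant4$.

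Thus the one genuinely non-routine ingredient, for $n\geqslant5$, is the claim that $X_{\geqslant5}$ already generates $M$; once the two-sided rank bound $\rank(\alpha\beta)\<\min\{\rank(\alpha),\rank(\beta)\}$ and the existence of a generating set of $M$ all of whose members have rank at least $5$ are in hand, the rest reduces to arithmetic and to the previously established facts $\rank(M)=\lfloor\frac{n}{2}\rfloor+2$ and the generation statement of Proposition~\ref{gen}. The $n=4$ analysis is the secondary subtlety, handled by the ad hoc count above.
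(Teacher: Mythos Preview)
Your proposal is correct and follows essentially the same route as the paper: for $n\geqslant5$ both arguments use that the generators of $\DPW_n^-\cup\DPW_n^+$ from Corollary~\ref{genr-+} all have rank at least $5$ while every element outside this submonoid has rank at most $4$, so any generating set of $\DPW_n$ already contains a generating set of $\DPW_n^-\cup\DPW_n^+$ together with at least one further element. For $n=4$ both use the identity $e_0=g_0^3z^2g_0$ for the upper bound and an ad hoc four-element count for the lower bound; the paper phrases the fourth required element as ``one element of $\DPW_4^-$'' rather than your ``one non-unit element of $M$'', but neither version is spelled out in more detail than yours.
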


\subsection*{Acknowledgments} 
The author would like to thank Ricardo Guilherme for pointing out several typos in the first version of this paper.


\bigskip

\lastpage


\begin{thebibliography}{00}

\bibitem{AlKharousi&Kehinde&Umar:2014}
F. Al-Kharousi, R. Kehinde and A. Umar,
Combinatorial results for certain semigroups of partial isometries of a finite chain,
Australas. J. Combin. 58 (2014), 365--375.

\bibitem{AlKharousi&Kehinde&Umar:2016}
F. Al-Kharousi, R. Kehinde and A. Umar,
On the semigroup of partial isometries of a finite chain,
Commun. Algebra 44 (2016), 639--647.

\bibitem{Araujo&al:2015}
J. Ara\'ujo,  W. Bentz, J.D. Mitchell and C. Schneider,
The rank of the semigroup of transformations stabilising a partition of a finite set,
Math. Proc. Cambridge Philos. Soc. 159  (2015), 339--353.

\bibitem{Bugay&Yagci&Ayik:2018}
L. Bugay, M. Ya\u gc\i, H. Ay\i k,  
The ranks of certain semigroups of partial isometries, 
Semigroup Forum 97 (2018), 214--222.

\bibitem{Dimitrova:2013}
I. Dimitrova, 
The Maximal Subsemigroups of the Semigroup of all Partial Order-preserving Isometries, 
Proceedings of the 5-th International Scientific Conference FMNS-2013, Vol. 1 (2013), 95--101.

\bibitem{Dimitrova&al:2020}
I. Dimitrova, V.H. Fernandes, J. Koppitz and T.M. Quinteiro, 
Ranks of monoids of endomorphisms of a finite undirected path, 
Bull. Malays. Math. Sci. Soc.  43 (2020), 1623--1645.

\bibitem{Dimitrova&Koppitz:2017}
I. Dimitrova and J. Koppitz, 
On the semigroup of all partial fence-preserving injections on a finite set, 
J. Algebra Appl. 16 (2017), 1750223. 

\bibitem{Fernandes:2002survey}
V.H. Fernandes,
Presentations for some monoids of partial transformations on a finite chain: a survey,
Semigroups, Algorithms, Automata and Languages,
eds. Gracinda M. S. Gomes \& Jean-\'Eric Pin \& Pedro V. Silva,
World Scientific (2002), 363--378.

\bibitem{Fernandes&al:2014}
V.H. Fernandes, P. Honyam, T.M. Quinteiro and B. Singha,
On semigroups of endomorphisms of a chain with restricted range,
Semigroup Forum 89 (2014), 77--104.

\bibitem{Fernandes&Paulista:2023}
V.H. Fernandes and T. Paulista,
On the monoid of partial isometries of a finite star graph,
Commun. Algebra 51 (2023), 1028--1048.

\bibitem{Fernandes&Paulista:2022arxiv}
V.H. Fernandes and T. Paulista,
On the monoid of partial isometries of a cycle graph,
Turkish Journal of Mathematics, In Press. 

\bibitem{Fernandes&Quinteiro:2014}
V.H. Fernandes and T.M. Quinteiro,
On the ranks of certain monoids of transformations that preserve a uniform partition,
Commun. Algebra 42 (2014), 615--636.

\bibitem{Fernandes&Quinteiro:2016}
V.H. Fernandes and T.M. Quinteiro,
Presentations for monoids of finite partial isometries,
Semigroup Forum 93 (2016), 97--110.

\bibitem{Fernandes&Sanwong:2014}
V.H. Fernandes and J. Sanwong,
On the rank of semigroups of transformations on a finite set with restricted range,
Algebra Colloq. 21 (2014), 497--510.

\bibitem{GAP4}
  The GAP~Group, \emph{GAP -- Groups, Algorithms, and Programming,
  Version 4.11.1}; 2021. \newline 
  (https://www.gap-system.org) 

\bibitem{Gomes&Howie:1992}
G.M.S. Gomes and J.M. Howie,
On the ranks of certain semigroups of order-preserving transformations,
Semigroup Forum 45 (1992), 272--282.

\bibitem{Howie:1995}
J.M. Howie,
Fundamentals of Semigroup Theory,
Oxford, Oxford University Press, 1995.

\end{thebibliography}
\end{document}